\newtheorem{thm}{Theorem}[section]{\bf}{\it}
\newtheorem{lem}[thm]{Lemma}{\bf}{\it}
\newtheorem{prop}[thm]{Proposition}{\bf}{\it}
\newtheorem{cor}[thm]{Corollary}{\bf}{\it}
{\bf}{\it}
{\bf}{\it} 
{\bf}{\it}
\theoremstyle{definition}
\theoremstyle{remark}
\numberwithin{equation}{section}
\newcommand{\R}{\mathbb R}
\newcommand{\loc}{{\operatorname{loc}}}
\newcommand{\spt}{\operatorname{spt}}
\newcommand{\dist}{{\operatorname{dist}\,}}
\newcommand{\id}{{\operatorname{id}}}
\newcommand{\degree}{{\operatorname{deg}}}
\newdimen\vintkern\vintkern11pt
\def\vint{-\kern-\vintkern\int}
\newcommand{\dt}{\;\mathrm{d}t}
\newcommand{\ds}{\;\mathrm{d}s}
\newcommand{\dx}{\;\mathrm{d}x}
\newcommand{\dy}{\;\mathrm{d}y}
\newcommand{\dphi}{\;\mathrm{d}\phi}
\newcommand{\norm}[1]{\lVert #1 \rVert}
\newcommand{\cE}{\mathcal{E}}
\newcommand{\Jnorm}[1]{\nparallel #1 \rVert}
\newcommand{\haus}{\mathcal{H}}
\newcommand{\dhaus}{\;\mathrm{d}\haus}
\newcommand{\hop}{\mathcal{K}}
\newcommand{\cT}{\mathcal{T}}
\newcommand{\trace}{\mathrm{tr}}
\begin{document}

\title{Quasiconformal extension fields}
\date{}
\author{Pekka Pankka \and Kai Rajala}

\thanks{Both authors were supported 
by the Academy of Finland and P.P. by NSF grant DMS-0757732. 
Part of this research was done when P.P. was visiting University of Jyv\"askyl\"a. He wishes to thank the department for the hospitality.}  
\subjclass[2000]{Primary 30C65, 35J60; Secondary 46E35, 49J52, 57M12}
\keywords{quasiconformal frame, extension field, conformal energy, non-linear potential theory}

\begin{abstract}
We consider extensions of differential fields of mappings and obtain a lower energy bound for quasiconformal extension fields in terms of the topological degree. 
We also consider the related minimization problem for the $q$-harmonic energy, and show that the energy minimizers admit higher integrability.
\end{abstract}

\maketitle

\section{Introduction}

A continuous mapping $f\colon \R^n\setminus A \to \R^n$, where $A$ is an open annulus in $\R^n$, can be extended, by classical methods, to a continuous mapping $\bar f\colon \R^n\to \R^n$. Although orientation preserving mappings do not need to admit orientation preserving extension in general, for homeomorphisms this extension problem has a solution in the form of the \emph{annulus theorem}; 
see e.g. \cite{KirbyR:Stahac} and \cite{KirbyR:Fouetm} for detailed discussions.

In the quasiconformal category the annulus theorem is due to Sullivan \cite{SullivanD:Hypgh} and it yields that given a quasiconformal embedding $f\colon \R^n\setminus A \to \R^n$, where $A$ is an annulus, there exists a quasiconformal mapping $\bar f \colon \R^n \to \R^n$ so that $\bar f|\R^n\setminus A'  = f$, where $\bar A\subset \mathrm{int} A'$; the distortion of the extension is quantitatively controlled. We refer to Tukia-V\"ais\"al\"a \cite{TukiaP:Lipqae} for a detailed discussion on the annulus theorem in the quasiconformal category. A simple consequence of the annulus theorem is that a mapping $f\colon \R^n\setminus A\to \R^n$, that is quasiconformal embedding in the components of $\R^n\setminus A$, can be extended to a quasiconformal mapping $\R^n\to \R^n$ if we are allowed to precompose $f$ with a Euclidean similarity in one of the components of $\R^n\setminus A$. For more general non-injective mappings of quasiconformal type, i.e., for quasiregular mappings, extension results of this type are not known. We refer to \cite{ReshetnyakY:Spambd} and \cite{RickmanS:Quam} for the theory of quasiregular mappings. In this article, we discuss quantitative estimates, in terms of the degree, for the non-existence of extensions. 

If we focus on matrix fields instead of the differential fields of mappings, it is easy to see that the extension problem admits an orientation preserving solution in the sense that the differential of an orientation preserving $C^1$-mapping $f\colon \R^n\setminus A \to \R^n$ admits an extension to a continuous matrix field $M$ on $\R^n$ having non-negative determinant. This matrix field is not, in general, a differential field of a mapping, but it can be integrated to obtain a mapping $\tilde f\colon \R^n \to \R^n$ so that the difference $\tilde f -f$ is bounded. The difference $M-D\tilde f$ can be viewed to measure the non-exactness of the extension field $M$.

We estimate the non-exactness of extensions for differential fields in the context of quasiconformal geometry, i.e. we consider matrix fields satisfying the quasiconformality condition
\begin{equation}
\label{eq:QC1}
|M(x)|^n \le K \det M(x)\quad \mathrm{a.e.\ in\ } A,
\end{equation}
where $|M(x)|$ is the operator norm of the matrix $M(x)$. Our main theorem gives a quantitative estimate for the non-exactness of the extension in terms of the degree information on the underlying mappings. For the statement, we introduce some notation. Let $B^n(r)$ be a Euclidean ball of radius $r>0$ about the origin. We denote $A(r,R)=B^n(R)\setminus \bar B^n(r)$ for $0<r<R$.

For notational convenience, we consider $1$-forms and $1$-(co)frames instead of vectors and matrix fields. We say that an $n$-tuple $\rho=(\rho_1,\ldots, \rho_n)$ of measurable $1$-forms on a domain $\Omega$ is a \emph{measurable frame}. Moreover, for $p \geq 1$ and $q \geq 1$, we say that $\rho$ is a \emph{$W_{p,q}$-frame} if $\rho_i\in L^p(\bigwedge^1 \Omega)$ and $d\rho_i \in L^q(\bigwedge^2 \Omega)$ for every $i=1,\ldots,n$. The local space $W^\loc_{p,q}$ of frames is defined similarly. 

A frame $\rho$ is said to be \emph{$K$-quasiconformal in $A(r,R)$} if  
\begin{equation}
\label{eq:QC}
\tag{QC}
|\rho|^n \le K \star (\rho_1\wedge \cdots \wedge \rho_n)\quad \mathrm{a.e.\ in\ } A(r,R),
\end{equation}
where $|\rho|$ is the operator norm of $\rho$, see Section \ref{sec:preli}. After the natural identification of frames and matrix fields, the two conditions \eqref{eq:QC1} and \eqref{eq:QC} coincide. 

Let $0<r<R$, and let $\rho_0$ and $\rho_1$ be frames defined on $B^n(r)$ and $\R^n\setminus \bar B^n(R)$, respectively. We say that a frame $\rho$ \emph{$K$-quasiconformally connects $\rho_0$ and $\rho_1$ in $A(r,R)$} if $\rho$ is $K$-quasiconformal in $A(r,R)$ and satisfies $\rho|B^n(r) = \rho_0|B^n(r)$ and $\rho|(\R^n\setminus \bar B^n(R)) = \rho_1$. In our main theorem we assume that $\rho_0$ and $\rho_1$ are (the restrictions of) $df_0$ and $dx$,  respectively, where $f_0 \colon \R^n \to \R^n$ is a continuous $W^{1,n}_\loc$-mapping and $dx$ is the standard frame $dx=(dx_1,\ldots, dx_n)$.

\begin{thm}
\label{thm:main}
Let $f_0 \in W^{1,n}_\loc(\R^n,\R^n)$ be a continuous mapping, $0<r<\infty$, $p>n$, and $n\ge 3$.  Suppose that a $W_{p,n/2}$-frame $\rho$ $K$-quasiconformally connects $df_0$ and $dx$ in $A(r/2,r)$. Then
\begin{equation}
\label{eq:main}
\int_{\R^n} \max\{\deg(y,f_0,B^n(r/2))-1,0\} \dy \le C\norm{d\rho}_{n/2}^n, 
\end{equation}
where $C=C(n,K)>0$.
\end{thm}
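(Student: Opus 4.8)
The plan is to extract, from the frame $\rho$, a mapping $\tilde f$ whose differential $d\tilde f$ is close to $\rho$ — in particular close to $df_0$ inside $B^n(r/2)$ and to $dx$ outside $B^n(r)$ — and then to compare the topological degree of $f_0$ on $B^n(r/2)$ with that of the (essentially trivial) behaviour of $\tilde f$ near infinity, charging the defect to $\norm{d\rho}_{n/2}$.

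\medskip\noindent\textbf{Step 1: Integrate the frame.}
First I would integrate the $1$-forms $\rho_i$ to produce a mapping. Since $d\rho_i \in L^{n/2}$ and $\rho_i \in L^p$ with $p>n$, each $\rho_i$ is a closed $1$-form modulo an error controlled by $d\rho_i$; on the contractible pieces $B^n(r/2)$ and $\R^n\setminus\bar B^n(r)$ it is exact, giving $\rho=df_0$ and $\rho=dx$ there respectively (up to constants of integration, which I fix so that $\tilde f=f_0$ on $B^n(r/2)$). On the annulus $A(r/2,r)$ I would solve, componentwise, $d\omega_i = d\rho_i$ with $\omega_i \in W^{1,n/2}$ and the Sobolev–Poincaré bound $\norm{\omega_i}_{(n/2)^*} + \norm{d\omega_i}_{n/2} \lesssim \norm{d\rho_i}_{n/2}$, where $(n/2)^* = n/(n-2)$ is the Sobolev conjugate (this is where $n\ge 3$ is needed so that $n/2 < n$ and the Sobolev embedding is nontrivial). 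Setting $\tilde f_i$ to be the primitive of $\rho_i - \omega_i$, I obtain a continuous $W^{1,n}_\loc$-mapping $\tilde f\colon\R^n\to\R^n$ with $d\tilde f = \rho - \omega$, where $\omega$ is supported in $A(r/2,r)$ and $\norm{\omega}_{n/(n-2)} \lesssim \norm{d\rho}_{n/2}$. Because $\rho$ satisfies \eqref{eq:QC} in $A(r/2,r)$, the map $\tilde f$ is ``quasiregular up to a small error'' there.

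\medskip\noindent\textbf{Step 2: Degree identity / Brouwer-type argument.}
Next I would use the degree formula together with the behaviour of $\tilde f$ outside $B^n(r)$: there $\tilde f(x) = x + c$, so for $|y|$ large $\deg(y,\tilde f,B^n(R)) = 1$ for every large ball. On the other hand, pushing the degree inward and using that $\tilde f = f_0$ on $B^n(r/2)$, the excess multiplicity $\max\{\deg(y,f_0,B^n(r/2))-1,0\}$ must be ``absorbed'' in the annulus $A(r/2,r)$. Quantitatively, I would write
\begin{equation*}
\int_{\R^n}\max\{\deg(y,f_0,B^n(r/2))-1,0\}\dy \le \int_{\R^n}\bigl(\deg(y,\tilde f,B^n(r)) - \deg(y,\tilde f,\R^n\setminus\bar B^n(r))\bigr)_+ \dy,
\end{equation*}
and then express the right-hand side via the integral of $\det d\tilde f$ over $A(r/2,r)$ (using the area formula / change of variables for Sobolev mappings, valid since $\tilde f \in W^{1,n}_\loc$ and $n\ge 3$ gives continuity and Lusin's condition considerations). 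This reduces the theorem to bounding $\int_{A(r/2,r)}|\det d\tilde f|\dx$.

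\medskip\noindent\textbf{Step 3: Energy estimate on the annulus.}
On $A(r/2,r)$ we have $d\tilde f = \rho - \omega$ with $\rho$ $K$-quasiconformal; hence $|\det d\tilde f| \lesssim |\rho|^n + |\omega|^n \lesssim K\,\star(\rho_1\wedge\cdots\wedge\rho_n) + |\omega|^n$. The term with $\star(\rho_1\wedge\cdots\wedge\rho_n) = \star(d\tilde f_1\wedge\cdots\wedge d\tilde f_n) + (\text{terms involving }\omega)$ has a null-Lagrangian structure: integrating $\star(d\tilde f_1\wedge\cdots\wedge d\tilde f_n)$ over $A(r/2,r)$ and using Stokes on the boundary spheres (where $\tilde f$ equals $f_0$ and $x\mapsto x+c$, whose Jacobians on those spheres contribute bounded quantities) leaves only the error terms. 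Collecting, $\int_{A(r/2,r)}|\det d\tilde f| \lesssim \norm{\omega}_n^n + \norm{\omega}_n\norm{\rho}_p^{n-1} + \dots$, and every term is ultimately dominated by a power of $\norm{d\rho}_{n/2}$ via the Sobolev–Poincaré bound from Step 1 (the mismatch between the $L^p$ norm of $\rho$, which is only bounded, not small, and the small $L^{n/2}$ norm of $d\rho$ is handled by Hölder, since each monomial in the expansion of the Jacobian contains at least one factor of $\omega$). This yields \eqref{eq:main} with $C=C(n,K)$.

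\medskip\noindent\textbf{Main obstacle.}
The hard part is Step 2 together with the Jacobian expansion in Step 3: making the degree bookkeeping rigorous for a merely-$W^{1,n}_\loc$ mapping $\tilde f$ that is quasiregular only up to an $L^{n/(n-2)}$ error, and in particular controlling the cross-terms $\star(d\tilde f_1\wedge\cdots)$ where some factors are $\omega$'s and some are $\rho$'s — these are not null Lagrangians and must be estimated by Hölder using $\rho\in L^p$, $p>n$, against $\omega\in L^{n/(n-2)}$, checking that the exponents close up (this is precisely where $p>n$ rather than $p=n$ is used, and where $n\ge 3$ enters a second time through the value of the Sobolev conjugate). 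I expect the degree identity itself to follow from standard properties of the Brouwer degree once continuity and the boundary values of $\tilde f$ are in hand, so the real work is the quantitative energy accounting.
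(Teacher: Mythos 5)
Your outline contains two genuine gaps, one in the construction and one in the quantitative heart of the argument. In Step 1 you need a correction form $\omega$ with $d\omega=d\rho$, \emph{compactly supported in the annulus}, and with a norm bound by $\norm{d\rho}_{n/2}$; the ordinary Poincar\'e lemma / Sobolev--Poincar\'e inequality does not produce compactly supported primitives, and this is exactly why the paper works instead with the Iwaniec--Lutoborski averaged homotopy operator $\cT$, accepts that $\cT\rho$ is only \emph{approximately} $x+c$ outside the ball, and compensates with the continuity estimate (Lemma \ref{lemma:cont}) and the degree-gluing Lemma \ref{lem:liimaus_identtiseen}. Also your exponent is wrong: the Sobolev conjugate of $n/2$ is $(n/2)^*=n$, not $n/(n-2)$; and even with the correct exponent you only get $\omega\in L^n$, hence $\tilde f\in W^{1,n}_\loc$, which is not enough for continuity, the degree, or the change-of-variables formula you invoke. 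The paper keeps $p>n$ regularity precisely by applying $\cT$ to the whole $L^p$ frame, so that $f=\cT\rho\in W^{1,p}_\loc$ is continuous and \eqref{cov} applies.

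The decisive flaw is the reduction in Steps 2--3. Bounding the excess degree by $\int_{A(r/2,r)}|\det d\tilde f|$ and then claiming this integral is dominated by powers of $\norm{d\rho}_{n/2}$ is false: take $d\rho=0$ and $f_0=\id$, so that $\tilde f$ is an exact quasiconformal frame; then $\int_A|\det d\tilde f|$ is comparable to $r^n$, while your claimed bound is $0$ --- the ``Stokes boundary terms'' you discard are not error terms. Moreover, estimating the cross-terms ``by H\"older using $\rho\in L^p$'' cannot yield a constant $C(n,K)$, because $\norm{\rho}_p$ is not controlled by the hypotheses at all. What actually works (Proposition \ref{prop:hius}) is to integrate the Jacobian only over the preimage of the set where the annulus degree is \emph{negative}, use the sign $J_\rho\ge 0$ coming from \eqref{eq:QC} there, bound the perturbation terms by $\norm{\cT d\rho}_n\le C\norm{d\rho}_{n/2}$ via Sobolev--Poincar\'e, and then \emph{absorb} $\norm{\rho}_{n,\tilde G}$ using the quasiconformality inequality $K\int_{\tilde G}J_\rho\ge\norm{\rho}_{n,\tilde G}^n$ together with a self-improvement argument; finally one needs the dichotomy on $\norm{d\rho}_{n/2}$ (the isoperimetric bound of Lemma \ref{lemma:ipe} for the large case, and Lemma \ref{lem:liimaus_identtiseen} to force outer degree $\le 1$ in the small case). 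None of this sign- and absorption-structure is present in your accounting, so the proposal does not close.
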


Similar results also hold in the plane, but with the $L^{n/2}$-norm replaced by other norms.
The estimate \eqref{eq:main} can be interpreted as a lower bound for the minimal energy of the extension frame. For the statement of our next result, let $A$ be an open annulus in $\R^n$. Given $W_{n,q}^{\loc}$-frames $\rho_0$ and $\rho_1$ in $\R^n$, we denote by $\cE_{q,K}(\rho_0,\rho_1;A)$ the set of $W_{n,q}^{\loc}$-frames $\rho$ $K$-quasiconformally connecting $\rho_0$ and $\rho_1$ in $A$. 

\begin{thm}
\label{thm:blah}
Let $q>n/2$, $n\geq 2$, $A$ an annulus in $\R^n$, and let $\rho_0$ and $\rho_1$ be $K$-quasiconformal $W_{n,q}^{\loc}$-frames in $\R^n$ that can be $K$-quasiconformally connected in $A$. Then there exists a $W_{n,q}^{\loc}$-frame $\rho\in \cE_{q,K}(\rho_0,\rho_1;A)$ so that 
\begin{equation}
\label{eq:blah}
\int_{A} |d\rho|_2^q = \inf_{\rho'} \int_{A} |d\rho'|_2^q,
\end{equation}
where the infimum is taken over $\rho' \in \cE_{q,K}(\rho_0,\rho_1;A)$, and the norm $|\cdot|_2$ is the Hilbert-Schmidt norm in $\bigwedge^2 \R^n$. Moreover, there exists $p=p(n,K)>n$ so that $\rho \in L^p_\loc(\bigwedge^1 A)$.
\end{thm}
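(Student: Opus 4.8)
The plan is to obtain existence by the direct method of the calculus of variations and to derive the higher integrability from the quasiconformality inequality \eqref{eq:QC} via a Hodge decomposition. For a frame $\rho'$ write $\rho'_i=df'_i+\theta'_i$ on $A$, where $\theta'_i$ is the co-exact part (together with, when $n=2$, the boundary-determined harmonic term, which we absorb into $\theta'_i$); then $d\theta'_i=d\rho'_i$, and the elliptic estimates for the Hodge decomposition give $\norm{\theta'_i}_{W^{1,q}_\loc(A)}\le C(\norm{d\rho'_i}_q+1)$. Since $q>n/2$, the Sobolev embedding upgrades this to $\theta'_i\in L^{q^*}_\loc(A)$ for some exponent $q^*=q^*(n,q)>n$. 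This gain is the mechanism that turns the $L^q$-bound on $d\rho'$ into compactness, and it is the only place where the hypothesis $q>n/2$ is used.

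First I would show that a minimizing sequence $\rho^{(j)}\in\cE_{q,K}(\rho_0,\rho_1;A)$ is bounded in $W_{n,q}(A)$. Minimality bounds $\norm{d\rho^{(j)}}_q$, hence each $\theta^{(j)}_i$ in $W^{1,q}_\loc(A)$ and in $L^{q^*}_\loc(A)$. To bound $\rho^{(j)}$ in $L^n(A)$, integrate \eqref{eq:QC}: $\int_A|\rho^{(j)}|^n\le K\int_A\star(\rho^{(j)}_1\wedge\cdots\wedge\rho^{(j)}_n)$. Expanding the wedge product through $\rho^{(j)}_i=df^{(j)}_i+\theta^{(j)}_i$, the fully exact term $df^{(j)}_1\wedge\cdots\wedge df^{(j)}_n$ is a null Lagrangian whose integral over $A$ is a boundary integral determined near $\partial A$ by the fixed data $\rho_0,\rho_1$ and the traces of the $\theta^{(j)}_i$, hence bounded uniformly in $j$; every remaining term contains a factor $\theta^{(j)}_i\in L^{q^*}$ with $q^*>n$, so by H\"older and Young it is dominated by $\varepsilon\int_A|df^{(j)}|^n$ plus a constant depending only on the uniform bounds. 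Absorbing the $\varepsilon$-term gives $\sup_j\int_A|\rho^{(j)}|^n<\infty$. Passing to a subsequence, $\rho^{(j)}\rightharpoonup\rho$ in $L^n(A)$, $d\rho^{(j)}\rightharpoonup d\rho$ in $L^q(A)$, and, by Rellich together with $q^*>n$, $\theta^{(j)}_i\to\theta_i$ strongly in $L^n_\loc(A)$.

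Next I would verify $\rho\in\cE_{q,K}(\rho_0,\rho_1;A)$ and that it realizes the infimum. The boundary conditions $\rho|B^n(r)=\rho_0$ and $\rho|(\R^n\setminus\bar B^n(R))=\rho_1$ pass to the weak limit since $\rho^{(j)}$ equals the prescribed frame there for every $j$. For the quasiconformality of the limit — the main point — fix any subdomain $U\subset\subset A$. Weak lower semicontinuity of $\rho'\mapsto\int_U|\rho'|^n$ gives $\int_U|\rho|^n\le\liminf_j\int_U|\rho^{(j)}|^n\le K\liminf_j\int_U\star(\rho^{(j)}_1\wedge\cdots\wedge\rho^{(j)}_n)$, and the last quantity equals $K\int_U\star(\rho_1\wedge\cdots\wedge\rho_n)$: expanding as above, the exact term converges distributionally to $df_1\wedge\cdots\wedge df_n$ by the classical weak continuity of Jacobians on $W^{1,n}$, while each term containing some $\theta^{(j)}_i$ converges because that factor converges strongly in $L^n_\loc$ and the remaining factors converge weakly in $L^n$ (a div--curl type argument); since these densities are nonnegative, convergence of the integrals over $U$ follows. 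Hence $\int_U|\rho|^n\le K\int_U\star(\rho_1\wedge\cdots\wedge\rho_n)$ for all such $U$, so \eqref{eq:QC} holds a.e.\ and $\rho\in\cE_{q,K}(\rho_0,\rho_1;A)$. Minimality is then immediate from convexity of $\xi\mapsto|\xi|_2^q$, which yields $\int_A|d\rho|_2^q\le\liminf_j\int_A|d\rho^{(j)}|_2^q=\inf_{\rho'}\int_A|d\rho'|_2^q$.

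For the higher integrability I would again use $\rho_i=df_i+\theta_i$ with $\theta_i\in L^{q^*}_\loc(A)$, $q^*>n$. Writing \eqref{eq:QC} as $|df+\theta|^n\le K\star(df_1\wedge\cdots\wedge df_n)$ plus terms of lower order in $\theta$, the mapping $f=(f_1,\dots,f_n)$ is a quasiregular mapping perturbed by an $L^{q^*}_\loc$-term. A Caccioppoli inequality for $f$ on balls $B\subset\subset A$ (controlling $\int_{B/2}|df|^n$ by a negative power of the radius times $\int_B|f-c|^n$ for suitable constant vectors $c$, plus a contribution of $\theta$) followed by Gehring's lemma in the Giaquinta--Modica form gives a reverse H\"older inequality and hence $df\in L^{n+\varepsilon}_\loc(A)$ with $\varepsilon=\varepsilon(n,K)>0$; combined with $\theta\in L^{q^*}_\loc$ this gives $\rho\in L^p_\loc(\bigwedge^1A)$ for $p=\min\{n+\varepsilon,q^*\}>n$. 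The main obstacles are the two places where the non-exact parts intervene: the uniform bound on $\int_A\star(\rho^{(j)}_1\wedge\cdots\wedge\rho^{(j)}_n)$ along the minimizing sequence, and the weak continuity of the Jacobian density $\star(\rho^{(j)}_1\wedge\cdots\wedge\rho^{(j)}_n)$ under the weak limit; both are resolved precisely by the strong $L^n_\loc$-compactness of the co-exact parts, which in turn rests on the gain $q>n/2$.
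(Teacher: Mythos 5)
Your existence argument is essentially sound and runs parallel to the paper's: the paper implements the same two ideas with the averaged Poincar\'e homotopy operator $\cT$ instead of a Hodge decomposition. The coercivity bound ($L^n$-control of a minimizing sequence by the fixed boundary frames plus $\norm{d\rho}_q$) is their Lemma \ref{lemma:rho_bound}, proved by applying Stokes on spheres $S^{n-1}(t)$ and averaging over $t\in[R,2R]$ in the collar where the frame equals $\rho_1$; this sidesteps the weakest point of your sketch, namely the claim that $\int_A df_1\wedge\cdots\wedge df_n$ is ``a boundary integral \dots bounded uniformly in $j$''. The potentials $f^{(j)}_i$ are not fixed near $\partial A$, and their traces lie only in $W^{1-1/n,n}(\partial A)$, which does not obviously control an $(n-1)$-fold product of tangential derivatives on $\partial A$; you would need to normalize by constants and run exactly this kind of averaging over spheres (or work on a slightly larger ball where the frame is literally the fixed $\rho_1$) to make that step honest. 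Your hands-on verification that the weak limit is $K$-quasiconformal (weak continuity of minors plus strong $L^n_\loc$ convergence of the coexact parts, using $q>n/2$) is a legitimate substitute for the paper's appeal to the compensated compactness theorem of Iwaniec--Lutoborski.

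The genuine gap is in the higher integrability. Your argument never uses minimality: you treat $\cT d\rho$ (your $\theta$) as a fixed inhomogeneity in $L^{q^*}_\loc$ and run Caccioppoli--Gehring for the perturbed quasiregular map $f$. The Giaquinta--Modica form of Gehring's lemma then caps the gain by the integrability of the error term, so you get $\rho\in L^p_\loc$ only for $p=\min\{n+\varepsilon(n,K),\,q^*\}$ with $q^*=nq/(n-q)$ --- and $q^*\downarrow n$ as $q\downarrow n/2$, so this does not produce the asserted exponent $p=p(n,K)>n$ independent of $q$. The paper avoids this precisely by exploiting minimality: since $(1+h)\rho$ stays in $\cE_{q,K}$ for scalar $h\in C^\infty_0$, $h\ge -1$, the minimizer satisfies the Euler--Lagrange equation \eqref{eq:EL}, hence the Caccioppoli inequality $\Jnorm{d\rho}_{q,B}\le (2q/s)\Jnorm{\rho}_{q,2B}$ (Corollary \ref{cor:Caccioppoli}), which converts every occurrence of $d\rho$ back into $\rho$ itself. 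This yields the homogeneous reverse H\"older inequality $\Jnorm{\rho}_{n,B}^n\le C\Jnorm{\rho}_{\max\{n-1,q\},2B}^n$ of Theorem \ref{thm:HI}, whose Gehring gain does not degenerate as $q\downarrow n/2$ (the right-hand exponent is then $n-1$). To close your proof you must bring the minimality of $\rho$ into the higher-integrability step --- e.g.\ via these scalar variations --- rather than proving an a priori estimate valid for every admissible frame, since such an estimate with a $q$-independent exponent is not what your perturbation argument delivers.
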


This paper is organized as follows. In Section \ref{sec:Poinc}, we discuss the $L^p$-Poincar\'e homotopy operator $\cT$ of Iwaniec and Lutoborski. This operator plays a crucial role in both of our theorems by providing a Sobolev-Poincar\'e inequality for $W_{p,q}$-frames. The interplay between degree of the potential $\cT\rho$ and the energy of $\rho$ is then discussed in Section \ref{sec:estimate}. A continuity estimate for $\cT\rho$ is proven in Section \ref{sec:cont}, and the proof of Theorem \ref{thm:main} is given in Section \ref{sec:proof_of_1}. In Section \ref{sec:min} we consider the variational problem for the energy and prove Theorem \ref{thm:blah}.

\section{Preliminaries}
\label{sec:preli}

The open ball in $\R^n$ about $x_0$ with radius $r>0$ is denoted by $B^n(x_0,r)$. For $x_0=0$ we abbreviate $B^n(r)=B^n(0,r)$ and $B^n=B^n(1)$. The corresponding closed balls are denoted by $\bar B^n(x_0,r)$, $\bar B^n(r)$, and $\bar B^n$. The sphere of radius $r$ about the origin is denoted by $S^{n-1}(r)$ and the unit sphere in $\R^n$ by $S^{n-1}$. Given a ball $B=B^n(x,r)$ we commonly use also notation $\lambda B$ to denote the ball $B^n(x,\lambda r)$ for $\lambda>0$. 

Given a frame $\rho$, we denote by $|\rho|$ the operator norm
\[
|\rho| = \sup_{(v_1,\ldots,v_n)} |(\rho_1(v_1,\ldots, v_n), \ldots, \rho_n(v_1,\ldots, v_n))|,
\]
where the supremum is taken over $n$-tuples $(v_1,\ldots, v_n)$ satisfying $ \sum_i |v_i|^2 = 1$. We abuse the common terminology slightly and call $J_\rho = \star (\rho_1 \wedge \cdots \wedge \rho_n)$ the \emph{Jacobian} of $\rho$, although we also write $J_f=\operatorname{det}(Df)$ when $f$ is mapping. 

Let $\Omega$ be a domain in $\R^n$. The \emph{weak exterior differential} of an $\ell$-form $\omega \in L^1_\loc(\bigwedge^\ell \Omega)$ is the unique form 
$d\omega \in L^1_\loc(\bigwedge^{\ell+1} \Omega)$, if exists, that satisfies  
\[
\int_\Omega d\omega \wedge \varphi = (-1)^{\ell+1} \int_\Omega \omega \wedge d\varphi
\]
for every $\varphi\in C^\infty_0(\bigwedge^{n-\ell-1} \Omega)$. Given $1\le p < \infty$ and $1\le q <\infty$, we denote by $W_{p,q}(\bigwedge^\ell \Omega)$ \emph{the (p,q)-partial Sobolev space} of the $\ell$-forms $\omega \in L^p(\bigwedge^\ell \Omega)$ having $d\omega \in L^q(\bigwedge^{\ell+1}\Omega)$. We will also say that a measurable $\ell$-form $\omega$ in $\Omega$ belongs to the Sobolev space $W^{1,p}(\bigwedge^\ell \Omega)$ if $\omega_I\in W^{1,p}(\Omega)$, where $\omega = \sum_I \omega_I dx_I$. Here $dx_I = dx_{i_1} \wedge \cdots \wedge dx_{i_\ell}$ for $I=(i_1,\ldots, i_\ell)$.
 
We call an $n$-tuple $\rho = (\rho_1,\ldots, \rho_n)$ of (Borel) measurable $1$-forms on $\Omega$ a \emph{measurable frame}. We say that a measurable frame is a \emph{$W_{p,q}$-frame} if the forms $\rho_i$, $i=1,\ldots,n$, belong to $W_{p,q}$. We then denote 
\[
d\rho = (d\rho_1,\ldots, d\rho_n).
\]

\subsection{Topological degree}
\label{gre}

Let $f\colon \bar B^n(r)\to \R^n$ be a continuous mapping 
and $y \in \R^n \setminus fS^{n-1}(r)$. Then the local degree $\degree(y,f,B^n(r))$ of $f$ at $y$ with respect to $B^n(r)$ is the mapping degree of $g \colon S^{n-1}\to S^{n-1}$, 
\[
g(x)=\frac{f(rx)-y}{|f(rx)-y|}. 
\]

If $f \colon \Omega \to \R^n$ is $C^{\infty}$, and if $G \subset \Omega$ is a domain compactly contained in $\Omega$, then the local degree satisfies the change of variables formula 
\begin{equation}
\label{cov}
\int_{G} \eta(f(x))J_f(x)\dx = \int_{\R^n} \eta(y) \degree(y,f,G)\dy 
\end{equation} 
for every non-negative $\eta \in L^1(G)$. In fact, the degree can be defined by using \eqref{cov} and the property that $\degree(y,f,G)=\degree(z,f,G)$ whenever $y$ and $z$ lie in the same component of $\R^n \setminus f(\partial \Omega)$. We will use the fact that \eqref{cov} remains valid for mappings $f \in W^{1,p}(\Omega,\R^n)$ when $p>n$; see e.g.\;\cite{MartioO:Lusin}.

We will use the following properties of the local degree; see e.g.\;\cite[I.4.2]{RickmanS:Quam}. Suppose that $f_i \colon \Omega \to \R^n$, $i=0,1$, are continuous, $G \subset \subset \Omega$ is a domain, and $y \in \R^n$. If there exists a homotopy $H:[0,1]\times \overline{G} \to \R^n$ 
so that $y \notin H([0,1]\times \partial G)$, $H(0,x)=f_0(x)$, and $H(1,x)=f_1(x)$ for every $x \in \overline{G}$, then 
\begin{equation}
\label{haka}
\degree(y,f_0,G)=\degree(y,f_1,G). 
\end{equation} 
Also, if $U \subset G$ is open, and if $y \notin f_0(\partial U \cup \partial G)$, then 
\begin{equation}
\label{arka}
\degree(y,f_0,G)=\degree(y,f_0,U)+\degree(y,f_0,G \setminus \overline{U}). 
\end{equation}


\section{Averaged Poincar\'e homotopy operator}
\label{sec:Poinc}

Iwaniec and Lutoborski introduced the $L^p$-averaged Poincar\'e homotopy operator in \cite{IwaniecT:Intenl}. 

Given $y\in \R^n$, we denote by $\hop_y \colon C^{\infty}(\bigwedge^\ell \R^n) \to C^{\infty}(\bigwedge^{\ell-1} \R^n)$, $\ell=1,\ldots, n-1$, the \emph{Poincar\'e homotopy operator (at $y$)}
\[
\hop_y\omega(x;v_1,\ldots,v_{\ell-1})= \int_0^1 t^{\ell-1}\omega(y+t(x-y);x-y,v_1,\ldots,v_{\ell-1})\dt. 
\]

As in \cite{IwaniecT:Intenl} we define \emph{an averaged Poincar\'e homotopy operator} $\cT$ as follows. Let $\varphi \in C_0^{\infty}(B^n(1/4))$ be non-negative with integral one. From now on we consider $\varphi$ to be fixed.

We set $\cT \colon L^1_\loc(\bigwedge^\ell \R^n) \to L^1_\loc(\bigwedge^{\ell-1}\R^n)$ by 
\begin{equation}
\label{eq:Tdef}
\cT \omega(x;v_1,\ldots,v_{\ell-1}) = \int_{\R^n} \varphi(y) \hop_y \omega(x;v_1,\ldots,v_{\ell-1})\dy;
\end{equation}
$\cT$ is well-defined by \cite[(4.15)]{IwaniecT:Intenl}. Both operators $\hop$ and $\cT$ satisfy a chain homotopy condition, which for $\cT$ reads as
\begin{equation}
\label{eq:cht}
\id = d\cT + \cT d.
\end{equation}

For all $p>1$, we can consider $\cT$ as a bounded operator $\cT \colon L^p(\bigwedge^\ell B^n) \to W^{1,p}(\bigwedge^{\ell-1} B^n)$; see \cite[Proposition 4.1]{IwaniecT:Intenl}. The chain homotopy condition together with the Sobolev embedding theorem then give the Sobolev-Poincar\'e inequality
\begin{equation}
\label{eq:sp}
\norm{\cT d\omega}_{p^*, B^n} \le C(n,p)\norm{d\omega}_{p,B^n},
\end{equation}
where $\omega \in W_{1,p}(\bigwedge^\ell B^n)$, $1<p<n$, $\ell \ge 1$, and $p^*=np/(n-p)$ is the Sobolev exponent; see \cite[Corollary 4.2]{IwaniecT:Intenl}.

The formula \eqref{eq:Tdef} naturally defines an averaged homotopy operator on $L^p(\bigwedge^\ell B^n(r))$ for all $r>0$ if the function $\varphi$ is scaled properly. To avoid such technicalities, we define for all $r>0$ the Poincar\'e homotopy operator by $\cT_r = \lambda_{1/r}^* \circ \cT \circ \lambda_r^*$, where $\lambda_r \colon \R^n\to \R^n$ is the mapping $\lambda_r(x) = rx$. By scale invariance of \eqref{eq:sp}, operators $\cT_r$ satisfy the same Sobolev-Poincar\'e inequality as $\cT$. Moreover, due to condition $\spt \varphi \subset B^n(1/4)$, they satisfy
\[
\cT_r df = f + c 
\]
on $B^n(r/4)$ for functions $f\in W^{1,1}(B^n(r))$. 

Given a frame $\rho$, we extend the notation $\cT \rho$ to denote the mapping $\cT \rho = (\cT \rho_1,\ldots, \cT \rho_n) \colon \Omega \to \R^n$.

We end this section with an application of the isoperimetric inequality
\begin{equation}
\label{eq:iso}
\int_{B} |J_f(x)| \dx \le C(n) \left( \int_{\partial B} |Df(y)|^{n-1} \dhaus^{n-1}(y)\right)^{n/(n-1)}
\end{equation}
in balls $B$ compactly contained in $\Omega$; see e.g. \cite[Chapter II, p.81]{ReshetnyakY:Spambd} or \cite[Section 6]{ReshetnyakY:Somgpf}.

\begin{lem}
\label{lemma:ipe}
Let $r>0$, $f_0 \colon \R^n \to \R^n$ be a mapping in $W^{1,n}_\loc$, $n\geq 3$, and let $\rho$ be a $W_{n,n/2}^\loc$-frame in $\R^n$. Then there exists $C=C(n)>0$ so that
\[
\int_{B^n(r)} |J_{\cT_r\rho}| \le C \left( \norm{\rho}_{n,A(r,2r)} + \norm{d\rho}_{n/2,B^n(r)} \right)^n. 
\]
\end{lem}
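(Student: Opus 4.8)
The plan is to estimate the Jacobian $J_{\cT_r\rho}$ of the potential $\cT_r\rho\colon\R^n\to\R^n$ on the ball $B^n(r)$ by combining the isoperimetric inequality \eqref{eq:iso} with the Sobolev--Poincar\'e bounds for the operator $\cT_r$. First I would reduce to the case $r=1$ by the scaling relation $\cT_r=\lambda_{1/r}^*\circ\cT\circ\lambda_r^*$: both sides of the claimed inequality transform homogeneously under $x\mapsto rx$ (the left side as an $n$-dimensional integral of an $n$-fold product of first derivatives, the right side as the $n$th power of the stated norms, which are likewise scale invariant here since $\norm{\rho}_{n}$ and $\norm{d\rho}_{n/2}$ scale the same way), so it suffices to prove
\[
\int_{B^n} |J_{\cT\rho}| \le C\bigl(\norm{\rho}_{n,A(1,2)}+\norm{d\rho}_{n/2,B^n}\bigr)^n .
\]

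Next I would apply the isoperimetric inequality \eqref{eq:iso} with $f=\cT\rho$ and $B=B^n$ (or, to be safe about the ``compactly contained'' hypothesis, a slightly larger concentric ball, absorbing the extra region into $A(1,2)$): this gives
\[
\int_{B^n}|J_{\cT\rho}|\le C(n)\Bigl(\int_{\partial B^n}|D(\cT\rho)|^{n-1}\,\dhaus^{n-1}\Bigr)^{n/(n-1)},
\]
so the task becomes bounding $\norm{D(\cT\rho)}_{n-1,\partial B^n}$. Here the components of $\cT\rho$ are $\cT\rho_i$, and by the chain homotopy identity \eqref{eq:cht}, $d(\cT\rho_i)=\rho_i-\cT(d\rho_i)$, so $|D(\cT\rho)|\lesssim |\rho|+|\cT(d\rho)|$ pointwise. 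Thus I need $L^{n-1}$ bounds on $\partial B^n$ for $|\rho|$ and for $|\cT(d\rho)|$. The first is controlled by a trace/Fubini argument: averaging over concentric spheres in the shell $A(1,2)$ produces a radius $\varrho\in(1,2)$ with $\norm{\rho}_{n-1,S^{n-1}(\varrho)}\lesssim\norm{\rho}_{n,A(1,2)}$ (after dilation back to $\partial B^n$), which is why the $A(r,2r)$-norm, rather than a norm on $B^n(r)$, appears on the right. The second term is where the hypotheses $n\ge3$ and the exponent $n/2$ enter: applying the Sobolev--Poincar\'e inequality \eqref{eq:sp} with $p=n/2<n$ (valid precisely when $n\ge3$) gives $\cT(d\rho_i)\in W^{1,n/2}(B^n)$ with $\norm{\cT(d\rho_i)}_{W^{1,n/2}}\lesssim\norm{d\rho_i}_{n/2}$ — more carefully, one uses $\cT d$ applied once more and the fact that $d\rho_i$ is itself closed only after one more exterior derivative, so really one writes $\cT(d\rho_i)$ and bounds it in $W^{1,n/2}$ directly via Proposition 4.1 of Iwaniec--Lutoborski — and then, since $(n/2)^*=n$ when... actually one needs the trace of a $W^{1,n/2}$ function on $\partial B^n$ to lie in $L^{n-1}$, which follows from the Sobolev trace embedding $W^{1,n/2}(B^n)\hookrightarrow L^{(n-1)\cdot\frac{n/2}{n-n/2}}(\partial B^n)=L^{n-1}(\partial B^n)$, an exact match. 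Combining, $\norm{D(\cT\rho)}_{n-1,\partial B^n}\lesssim\norm{\rho}_{n,A(1,2)}+\norm{d\rho}_{n/2,B^n}$, and raising to the power $n/(n-1)$...

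Wait — raising $\bigl(\norm{\rho}_{n-1,\partial B^n}\bigr)^{n-1}$ to the power $n/(n-1)$ gives the $n$th power, as required; I would present this bookkeeping cleanly at the end. The main obstacle I anticipate is the trace step: getting the $L^{n-1}(\partial B^n)$ bound on $\cT(d\rho)$ with the right exponent requires checking that the Sobolev trace exponent for $W^{1,n/2}(B^n)$ is exactly $n-1$ (it is: the boundary Sobolev exponent of $W^{1,p}(B^n)$ is $(n-1)p/(n-p)$, which at $p=n/2$ equals $n-1$), and handling the $f_0$-dependence — note $\rho$ need only agree with $df_0$ on a sub-annulus, and $f_0$ does not otherwise appear, so the hypothesis $f_0\in W^{1,n}_\loc$ is used only to guarantee that the frame $\rho$ restricted appropriately is genuinely a $W^\loc_{n,n/2}$-frame near $B^n(r)$; I would remark on this rather than belabor it. The secondary subtlety is the ``compactly contained'' requirement in \eqref{eq:iso}, handled by a routine enlargement of the ball.
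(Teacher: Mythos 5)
Your strategy is essentially the one used in the paper: after a scaling reduction (which is fine, since both sides of the inequality are invariant under $x\mapsto rx$), you apply the isoperimetric inequality \eqref{eq:iso} to $\cT\rho$, use the chain homotopy identity \eqref{eq:cht} to write $d\cT\rho_i=\rho_i-\cT d\rho_i$, and control the two resulting terms by $\norm{\rho}_{n,A(1,2)}$ and by the Sobolev--Poincar\'e bound \eqref{eq:sp} with $(n/2)^*=n$, which is exactly where $n\ge 3$ and the exponent $n/2$ enter. The only real deviation is in the bookkeeping of the boundary term: the paper never fixes a single sphere but integrates $\bigl(\int_{B^n(t)}|J_{\cT_r\rho}|\bigr)^{(n-1)/n}$ over $t\in(r,2r)$, converts the sphere integrals to an annulus integral by Fubini, and then uses H\"older on $A(r,2r)$ together with \eqref{eq:sp}; you keep a boundary integral on one sphere and invoke the critical trace embedding $W^{1,n/2}(B^n)\hookrightarrow L^{n-1}(\partial B^n)$ for the $\cT d\rho$ term, which is a legitimate alternative for that term.

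There is, however, one step that does not work as literally written. You apply \eqref{eq:iso} with $B=B^n$, so the boundary integral sits on the fixed sphere $\partial B^n$; but $\rho$ is only an $L^n$ form, so it has no trace on a prescribed sphere, and the bound $\norm{\rho}_{n-1,S^{n-1}(\varrho)}\le C\norm{\rho}_{n,A(1,2)}$ is available only for a suitably chosen radius $\varrho\in(1,2)$ (for almost every radius, by Fubini--Chebyshev). The phrase ``after dilation back to $\partial B^n$'' does not transfer this control to $\partial B^n$: dilating would change $\rho$, $\cT\rho$, and the left-hand side simultaneously. The repair is standard and is in effect what the paper does: either choose the good radius $\varrho$ first (simultaneously for $\rho$ and $\cT d\rho$, or using your trace bound for the latter, which is uniform in $\varrho\in(1,2)$) and apply \eqref{eq:iso} on $B^n(\varrho)$, using $\int_{B^n}|J_{\cT\rho}|\le\int_{B^n(\varrho)}|J_{\cT\rho}|$, or integrate over all radii as the paper does. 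With that adjustment -- and noting, as you correctly did, that $f_0$ plays no role in the proof of this lemma -- your argument is correct and gives the stated bound.
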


\begin{proof}
By the isoperimetric inequality \eqref{eq:iso},
\[
\int_{B^n(t)} |J_{\cT_r\rho}| \le C\left( \int_{\partial B^n(t)} |d\cT_r\rho|^{n-1}\right)^{\frac{n}{n-1}}
\]
for almost every $r\leq t \leq 2r$. Thus the Sobolev-Poincar\'e inequality \eqref{eq:sp} gives 
\begin{eqnarray*}
\int_{r}^{2r} \left( \int_{B^n(t)} |J_{\cT_r\rho}|\right)^{\frac{n-1}{n}} \dt 
&\le& C \int_r^{2r} \left( \int_{\partial B^n(t)}  |d\cT_r\rho|^{n-1} \right) \dt \\
&\le& C \int_{A(r,2r)} |d\cT_r\rho|^{n-1} \le C \int_{A(r,2r)} \left( |\rho|+ |\cT_rd\rho|\right)^{n-1} \\
&\le& C \left( \norm{\rho}_{n-1, A(r,2r)} + \norm{\cT_rd\rho}_{n-1,A(r,2r)} \right)^{n-1}  \\
&\le& C \left( r^{\frac{1}{n-1}}  \norm{\rho}_{n, A(r,2r)} + r^{\frac{1}{n-1}} \norm{d\rho}_{n/2,B^n(2r)} \right)^{n-1} \\
&\le& C r \left( \norm{\rho}_{n, A(r,2r)} + \norm{d\rho}_{n/2,B^n(2r)} \right)^{n-1}.
\end{eqnarray*}
Therefore, 
\begin{eqnarray*}
\left( \int_{B^n(r)} |J_{\cT_r\rho}|\right)^{\frac{n-1}{n}} &\le&
\frac{1}{r} \int_{r}^{2r} \left( \int_{B^n(t)}
|J_{\cT_r\rho}|\right)^{\frac{n-1}{n}} \dt \\ &\le& C \left(
\norm{\rho}_{n, A(r,2r)} + \norm{d\rho}_{n/2,B^n(2r)} \right)^{n-1}.
\end{eqnarray*}
The claim follows.
\end{proof}


\section{Energy and local degree}
\label{sec:estimate}

In this section we prove an integral estimate which relates the degree of the mapping $\cT \rho$ and the energy of the frame $\rho$. Given a continuous mapping $f \colon \overline{A(r,R)} \to \R^n$ we denote by $\Omega_-(f)$ the set 
\[
\Omega_-(f) = \{y \in \R^n \colon \deg(y, f, A(r,R))<0\}.
\]
The main result of this section reads as follows. 

\begin{prop}
\label{prop:hius}
Let $0< r < R <\infty$, $p>n$, and $n\ge 3$. Suppose that $\rho$ is a $W_{p,n/2}^\loc$-frame on $\R^n$ such that $\rho$ is $K$-quasiconformal in $A(r,R)$. Then 
\begin{equation}
\label{aska}
- \int_{\Omega_-(\cT_R\rho)} \deg(y, \cT_R \rho, A(r,R)) \dy \le C \norm{d\rho}_{n/2}^n,
\end{equation}
where $C=C(n,K,\varphi)>0$.
\end{prop}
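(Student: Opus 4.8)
The plan is to estimate, for each negative value $-m$ attained by the degree of $g := \cT_R\rho$ on $A(r,R)$, the measure of the superlevel set $\{y : \deg(y,g,A(r,R)) \le -m\}$, and then sum over $m \ge 1$ using the layer-cake identity
\[
-\int_{\Omega_-(g)} \deg(y,g,A(r,R))\dy = \sum_{m\ge 1} |\{y : \deg(y,g,A(r,R)) \le -m\}|.
\]
Each such superlevel set $U_m$ is open and bounded (the degree is locally constant off $g(\partial A(r,R))$ and vanishes in the unbounded component). The main mechanism is a modulus-of-continuity/oscillation estimate: since $g = \cT_R\rho$ and $\rho$ is $K$-quasiconformal in $A(r,R)$, the restriction $g|_{A(r,R)}$ should be, up to the controlled error coming from $d\rho$ via the chain homotopy $\id = d\cT + \cT d$, close to a $K$-quasiregular map, so its image cannot be too large without paying energy. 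Concretely, I would localize to a ball $B$ compactly contained in $A(r,R)$ on which the degree of $g$ is $\le -m$ at a.e.\ point of a set of measure comparable to $|U_m|$, and use the isoperimetric inequality together with the Sobolev–Poincaré inequality \eqref{eq:sp} for $\cT_R$ to bound $|U_m|$ in terms of $\int_B |J_g|$ and ultimately $\norm{d\rho}_{n/2}$ and $\norm{\rho}_{n,A(r,R)}$, as in Lemma \ref{lemma:ipe}.

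More precisely, I would proceed as follows. First, fix $m$ and pick a smooth subdomain; using \eqref{cov} (valid for $g\in W^{1,p}$, $p>n$) and additivity \eqref{arka}, one gets
\[
m\,|U_m| \le \int_{\{y:\deg(y,g,A(r,R))\le -m\}} |\deg(y,g,A(r,R))|\dy \le \int_{A(r,R)} |J_g(x)|\dx,
\]
which already shows $|U_m|\le \frac1m \int_{A(r,R)}|J_g|$. That alone would only give a logarithmically divergent sum, so the key improvement is that on the portion of $A(r,R)$ that actually "covers" $U_m$ with multiplicity $m$, the quasiconformality of $\rho$ forces the energy density to be controlled by the Jacobian of $\rho$, which in turn is controlled by $d\rho$ through the homotopy formula $d\cT_R\rho = \rho - \cT_R d\rho$. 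Thus $|Dg|^n = |d\cT_R\rho|^n \lesssim |\rho|^n + |\cT_R d\rho|^n \lesssim_K J_\rho + |\cT_R d\rho|^n$ a.e.\ in $A(r,R)$, and $\int_{A(r,R)} J_\rho$ is itself bounded by a boundary term plus $\norm{d\rho}_{n/2}^n$ (via Stokes/isoperimetry applied to the potential $\cT_R\rho$, exactly the argument of Lemma \ref{lemma:ipe} adapted to the annulus). Combining, one expects $|U_m| \lesssim \frac1m C\norm{d\rho}_{n/2}^n$ is still not summable, so instead one needs the sharper covering estimate: the sets $U_m$ are nested and $U_1 \supset U_2 \supset \cdots$, and the multiplicity-$m$ part of the energy over $A(r,R)$ must be at least $m$ times an isoperimetric lower bound for $|U_m|^{(n-1)/n}$, giving $m\,|U_m|^{(n-1)/n} \lesssim \big(\int_{A(r,R)} |Dg|^{n-1}\text{ over the relevant layer}\big)$ and hence, after raising to the power $n/(n-1)$ and summing the geometric-type series in $m$, the bound $\sum_m |U_m| \lesssim C\norm{d\rho}_{n/2}^n$.

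The delicate point — and the step I expect to be the main obstacle — is making the "multiplicity $m$ costs $m$ times the isoperimetric energy" argument rigorous for a Sobolev map $g$ that is only $W^{1,p}_\loc$ and not a genuine quasiregular map. One cannot directly invoke the path-lifting or Väisälä-type inequalities; instead the quantitative input has to come from the isoperimetric inequality \eqref{eq:iso} applied on a family of spheres $S^{n-1}(t)$ foliating $A(r,R)$, combined with the fact that $\deg(y,g,A(r,R)) = \int \deg(y,g,\cdot)$ decomposed over annular shells, so that each unit of degree over $U_m$ is witnessed on a definite proportion of shells. I would run the Fubini/coarea argument as in Lemma \ref{lemma:ipe} (integrating $\big(\int_{B^n(t)}|J_g|\big)^{(n-1)/n}$ in $t$ and bounding by $\int |d\cT_R\rho|^{n-1}$), but now keeping track of the level-set multiplicity, and then absorb the boundary term $\norm{\rho}_{n,A(r,R)}$ into $\norm{d\rho}_{n/2}$ using again the quasiconformality $|\rho|^n\le K J_\rho$ on $A(r,R)$ together with the global Sobolev–Poincaré control of $\cT_R\rho$. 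The constant will depend on $n$, $K$, and the fixed bump function $\varphi$ exactly because the operator norm bound for $\cT_R$ does.
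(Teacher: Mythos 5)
There is a genuine gap, and it sits exactly where you flag it yourself: the ``multiplicity $m$ costs $m$ times the isoperimetric energy'' step is never established, and without it your layer-cake scheme only yields the non-summable bound $|U_m|\lesssim \frac1m\int_{A(r,R)}|J_{\cT_R\rho}|$. Worse, the quantities you propose to fall back on cannot give the stated estimate even in principle. The total Jacobian mass $\int_{A(r,R)}|J_{\cT_R\rho}|$ and the norm $\norm{\rho}_{n,A(r,R)}$ are \emph{not} controlled by $\norm{d\rho}_{n/2}$: take $\rho=dx$ (or $df$ for any quasiconformal $f$), so that $d\rho=0$ while $\norm{\rho}_{n,A(r,R)}>0$. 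Hence your final absorption step, ``absorb the boundary term $\norm{\rho}_{n,A(r,R)}$ into $\norm{d\rho}_{n/2}$ using quasiconformality and Sobolev--Poincar\'e,'' is false as stated; Lemma \ref{lemma:ipe} bounds $\int|J_{\cT_R\rho}|$ by $(\norm{\rho}_{n}+\norm{d\rho}_{n/2})^n$ and the $\norm{\rho}_n$ term there is not removable. Any route that passes through global (unsigned) Jacobian or frame norms on the whole annulus must fail, because the right-hand side of \eqref{aska} vanishes for exact quasiconformal frames while those global quantities do not.

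What makes the proposition work is the sign structure, which your outline never exploits: only the \emph{negative} part of the degree is being estimated, and negative degree cannot be produced by $J_\rho\ge |\rho|^n/K\ge 0$; it must be paid for by the error term $\cT_R d\rho$ in the homotopy identity $d\cT_R\rho=\rho-\cT_R d\rho$. Concretely, set $\tilde G=(\cT_R\rho)^{-1}(\Omega_-(\cT_R\rho))\cap A(r,R)$ and apply the change of variables \eqref{cov} with $\eta=\chi_{\Omega_-}$: writing $I$ for the left-hand side of \eqref{aska}, one gets $-I\ge\int_{\tilde G}\bigwedge_i(\rho_i-\cT_R d\rho_i)$, expand, estimate the cross terms by H\"older and \eqref{eq:sp}, and use $K$-quasiconformality \emph{only on $\tilde G$} to arrive at $\frac1K\norm{\rho}_{n,\tilde G}^n+I\le C\sum_{k=1}^n\norm{d\rho}_{n/2}^k\norm{\rho}_{n,\tilde G}^{n-k}$. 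Since $I\ge 0$, an absorption argument gives $\norm{\rho}_{n,\tilde G}\le C(n,K)\norm{d\rho}_{n/2}$ (this is the legitimate version of the absorption you attempt globally: it holds on $\tilde G$ precisely because the degree is negative there), and then the same inequality bounds $I$ by $C\norm{d\rho}_{n/2}^n$. No level-set decomposition, coarea, or per-level isoperimetric inequality is needed, and attempting them does not supply the missing mechanism; the decisive step is the restriction to the preimage of the negative-degree set combined with the pointwise positivity of $J_\rho$.
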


\begin{proof}
Set $\tilde G = (\cT_R\rho)^{-1}(\Omega_-(\cT_R\rho))\cap A(r,R)$ and
\[
I = - \int_{\Omega_-(\cT_R\rho)} \deg(y, \cT_R \rho, A(r,R)) \dy.
\]
Since $\cT_R\rho \in W^{1,p}_\loc$ with $p>n$, we have, by the change of variables \eqref{cov} and by \eqref{eq:cht}, that
\begin{eqnarray*}
- I &\ge& \int_{\tilde{G}} d\cT_R\rho_1 \wedge \ldots \wedge d\cT_R \rho_n \\
&=& \int_{\tilde{G}} (\rho_1-\cT_R d\rho_1)\wedge \ldots \wedge (\rho_n-\cT_R d\rho_n). 
\end{eqnarray*}
The last integrand can be estimated from below by 
\[
J_{\rho}(x) - C \sum_{k=1}^n |\cT_R d\rho(x)|^k|\rho(x)|^{n-k}  
\]
almost everywhere. Here $C=C(n)$.

Then, by H\"older's inequality and the Sobolev-Poincar\'e inequality \eqref{eq:sp}, we obtain 
\begin{equation}
\label{eq:SP}
\begin{split}
- I&\ge \int_{\tilde{G}} J_{\rho}(x)\dx - C_0\sum_{k=1}^n \norm{\cT_R d \rho}_{n,B^n(R)}^k \norm{\rho}_{n,\tilde{G}}^{n-k} \\
&\ge   \int_{\tilde{G}} J_{\rho}(x)\dx - C_0\sum_{k=1}^n \norm{d \rho}_{n/2,B^n(R)}^k \norm{\rho}_{n,\tilde{G}}^{n-k}, 
\end{split}
\end{equation}
where $C_0=C_0(n)\ge 1$. 

Since $\rho$ is $K$-quasiconformal, 
\[
K \int_{\tilde G} J_{\rho}(x)\dx \geq \norm{\rho}_{n,\tilde{G}}^n.
\]
Thus
\begin{equation}
\label{reiat}
\frac{1}{K} \norm{\rho}_{n,\tilde{G}}^n + I \leq C_0  \sum_{k=1}^n \norm{d \rho}_{n/2,B^n(R)}^k \norm{\rho}_{n,\tilde{G}}^{n-k}. 
\end{equation}
We show that  
\begin{equation}
\label{eq:r1}
\norm{\rho}_{n,\tilde G} \le KC_0n \norm{d\rho}_{n/2,B^n(R)}.
\end{equation}
Suppose towards contradiction that \eqref{eq:r1} does not hold. Then, by \eqref{reiat},
\begin{eqnarray*}
\norm{\rho}_{n,\tilde{G}}^n &\le& C_0K  \sum_{k=1}^n \norm{d \rho}_{n/2,B^n(R)}^k \norm{\rho}_{n,\tilde{G}}^{n-k} \\ 
&<& C_0K\sum_{k=1}^n \left(\frac{1}{C_0Kn}\right)^k \norm{\rho}_{n,\tilde G}^k \norm{\rho}_{n,\tilde G}^{n-k} \le \norm{\rho}_{n,\tilde G}^n. 
\end{eqnarray*}
This is a contradiction. Thus \eqref{eq:r1} holds.

We may now estimate $I$ using \eqref{reiat} and \eqref{eq:r1} to obtain   
\[
I \le (2KC_0n)^n \norm{d\rho}_{n/2,B^n(R)}^n. 
\]
This concludes the proof.
\end{proof}


\section{A continuity estimate}
\label{sec:cont}

The second main ingredient in the proof of Theorem \ref{thm:main} is the following continuity estimate for $\cT \rho$.

\begin{lem}
\label{lemma:cont}
Let $u\in W^{1,1}_\loc(\R^n)$, $n \geq 2$, and let $\rho$ be a $W_{1,1}^\loc$-form in $\R^n$ so that $\rho = du$ in $\R^n\setminus \bar B^n$. Then there exists $C=C(n)$ so that
\begin{equation}
\label{eq:sup_est}
|\cT \rho(x) - \cT \rho(y)-(u(x)-u(y))| \le C \norm{d\rho}_1
\end{equation}
for almost every $x$ and $y \in \R^n\setminus B^n(2)$. 
\end{lem}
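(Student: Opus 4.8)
The plan is to exploit the chain homotopy identity \eqref{eq:cht} to rewrite $\cT\rho$ in a way that isolates the ``exact part'' $u$ from a controllable error term, and then to use the fact that $d\rho$ is supported in $\bar B^n$. Write $\rho = du + (\rho - du)$, where $\eta := \rho - du$ vanishes in $\R^n\setminus\bar B^n$ and satisfies $d\eta = d\rho$ (since $d(du)=0$ weakly), so $d\eta$ too is supported in $\bar B^n$. By linearity, $\cT\rho = \cT(du) + \cT\eta$. Since $\spt\varphi\subset B^n(1/4)\subset B^n$, the averaged operator $\cT$ reproduces exact forms on $\R^n\setminus B^n(2)$ in the sense that $\cT(du)(x) = u(x) + c$ there for some constant $c$ depending only on $u,\varphi$; one sees this from the definition $\hop_y(du)(x) = \int_0^1 \frac{d}{dt}u(y+t(x-y))\dt = u(x) - u(y)$ and integrating against $\varphi(y)\dy$ — this gives exactly $\cT(du)(x) = u(x) - \int_{\R^n}\varphi(y)u(y)\dy$, valid for a.e.\ $x$ without any restriction, but the point is the explicit clean formula. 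Hence for a.e.\ $x,y\in\R^n\setminus B^n(2)$,
\[
\cT\rho(x)-\cT\rho(y) - (u(x)-u(y)) = \cT\eta(x) - \cT\eta(y),
\]
so the lemma reduces to the estimate $|\cT\eta(x)-\cT\eta(y)| \le C\norm{d\eta}_1 = C\norm{d\rho}_1$ for a.e.\ such $x,y$.

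Next I would estimate $\cT\eta$ pointwise off $B^n(2)$. Apply the chain homotopy identity again: $\eta = d\cT\eta + \cT d\eta$. In $\R^n\setminus\bar B^n$ we have $\eta = 0$, hence $d(\cT\eta) = -\cT d\eta$ there; but $\cT\eta$ is a $0$-form, i.e.\ a function, so this says $\nabla(\cT\eta) = -\cT(d\eta)$ on $\R^n\setminus\bar B^n$ (in the distributional sense). Thus the oscillation of the function $\cT\eta$ between two points $x,y\in\R^n\setminus B^n(2)$ is controlled by integrating $|\cT(d\eta)|$ along a path in the connected set $\R^n\setminus\bar B^n$; concretely, $|\cT\eta(x)-\cT\eta(y)| \le \int_\gamma |\cT(d\eta)|$ for a suitable path $\gamma$. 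So it suffices to bound $\cT(d\eta)$ appropriately away from $B^n$.

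The remaining point is a decay/kernel estimate for $\cT$ applied to a form supported in $\bar B^n$. From the integral definition of $\hop_y$ and $\cT$, the value $\cT(d\eta)(x)$ for $|x|$ large involves the integral of $d\eta(y+t(x-y);x-y,\cdot)$ over $t\in[0,1]$ and $y\in\spt\varphi\subset B^n(1/4)$; since $d\eta$ lives in $\bar B^n$, the relevant $t$ are bounded away from $1$ (roughly $t\lesssim 2/|x|$), and the change of variables $z = y + t(x-y)$ produces a kernel comparable to $|x-z|^{-n}$ (the standard Iwaniec–Lutoborski kernel estimate, their (4.15) and the discussion around Proposition 4.1 in \cite{IwaniecT:Intenl}), so that $|\cT(d\eta)(x)| \le C\,\da(x,\bar B^n)^{-n}\,\|d\eta\|_1$ for $x\in\R^n\setminus B^n(2)$, and more generally $|\cT(d\eta)(x)|\le C(1+|x|)^{-n}\|d\eta\|_1$. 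Integrating this decay along a path from $x$ to $y$ that first goes radially outward, around a large sphere, and back in — all inside $\R^n\setminus\bar B^n$ — yields a finite bound $C(n)\|d\eta\|_1$ independent of $x,y$, because $\int_2^\infty s^{-n}\,\da s<\infty$ for $n\ge 2$ and the spherical parts contribute boundedly. This closes the estimate; combined with the displayed reduction it gives \eqref{eq:sup_est}.

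\textbf{Main obstacle.} The delicate step is the kernel decay estimate $|\cT(d\eta)(x)|\lesssim (1+|x|)^{-n}\|d\eta\|_1$ for $x$ far from the support of $d\eta$: one must unwind the averaged Poincaré operator, justify the change of variables $y\mapsto z=y+t(x-y)$ uniformly in $t$, track that the support constraint $z\in\bar B^n$ forces $t$ small when $|x|$ is large, and control the resulting singular kernel — this is exactly where the structure of $\cT$ (in particular $\spt\varphi\subset B^n(1/4)$) and the $L^1$ bound on $d\rho$ enter, and it is the part that cannot be quoted verbatim from the Sobolev–Poincaré inequality \eqref{eq:sp}. The path-integration to convert the pointwise decay into an oscillation bound is then routine given $n\ge 2$.
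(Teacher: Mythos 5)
Your reduction is fine as far as it goes: $\hop_y(du)(x)=u(x)-u(y)$ gives $\cT(du)=u-\int\varphi\,u$, so the lemma is equivalent to $|\cT\eta(x)-\cT\eta(y)|\le C\norm{d\rho}_1$ with $\eta=\rho-du$, and the identity $d(\cT\eta)=-\cT d\eta$ on $\R^n\setminus\bar B^n$ is correct; up to here you parallel the paper, which instead subtracts $\int_{[a,b]}\rho=u(b)-u(a)$ and applies Stokes on the simplices $[y,b,a]$. The gap is the claimed far-field bound $|\cT(d\eta)(x)|\lesssim(1+|x|)^{-n}\norm{d\eta}_1$. The Iwaniec--Lutoborski kernel estimate is of order $|x-z|^{1-n}$, not $|x-z|^{-n}$ (the latter is not even locally integrable, so it could not give the known $L^p\to W^{1,p}$ bounds), and it is proved for $x$ in the reference ball; for $x$ far from the support the correct rate coming from your own support analysis is only $|x|^{-1}$: in $\hop_y d\eta(x)=\int_0^1 t\, d\eta(y+t(x-y);x-y,\cdot)\dt$ the constraint $y+t(x-y)\in\bar B^n$ forces $t\lesssim 1/|x|$, the contraction with $x-y$ contributes one factor $|x|$, and the substitution $s=t|x-y|$ cancels one factor $|x|^{-1}$, so the only net gain is $t\lesssim|x|^{-1}$; converting the $\varphi$-average of line integrals into a volume integral then yields $|\cT(d\eta)(x)|\le C|x|^{-1}\norm{d\eta}_1$, and this rate is sharp (test on a bump $2$-form).

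With only $|x|^{-1}$ decay the path argument does not close: the radial pieces give $\int_2^{|x|}s^{-1}\ds\sim\log|x|$, not a uniform constant. In fact no argument integrating a pointwise bound on $|\nabla\cT\eta|=|\cT d\eta|$ along a single path can work, because $\cT\eta(x)=\int\varphi(y)\bigl(\int_{[y,x]}\eta\bigr)\dy$ converges, as $|x|\to\infty$ in direction $\theta$, to the direction-dependent limit $G(\theta)=\int\varphi(y)\int_0^\infty \eta(y+s\theta;\theta)\ds\dy$, which is generically non-constant; hence $|\cT d\eta|$ genuinely decays no faster than $|x|^{-1}$, and the oscillation you must control survives at infinity. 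What bounds this oscillation by $\norm{d\rho}_1$ is exactly the ingredient your plan omits and the paper supplies: the difference of two ray integrals from $y$ is rewritten via Stokes as $\int_{[y,b,a]}d\rho$, and then a cylindrical-coordinates computation around $L(a,b)$ shows that the $\varphi$-average over $y$ of the $2$-plane integrals $\int_{P(y,a,b)}|d\rho|$ is controlled by $\norm{d\rho}_1$. This is an essentially two-dimensional averaging estimate (a single plane integral of an $L^1$ function need not even be finite) and cannot be recovered from any pointwise kernel bound, so as written your proposal is missing the core of the proof.
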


For the proof of this lemma, we introduce some notation. Given points $x$ and $y$ in $\R^n$ we denote the (oriented) line segment from $x$ to $y$ by $[x,y]$. Given points $x$, $y$, and $z$ in $\R^n$ we denote by $[x,y,z]$ the (oriented) $2$-simplex that is the convex hull of $\{x,y,z\}$. Similarly, for affinely independent points, we define $L(x,y)$ and $P(x,y,z)$ to be the (unique) line and plane containing $\{x,y\}$ and $\{x,y,z\}$, respectively.

\begin{proof}
By the density of smooth frames in $W_{1,1}$, we may assume that $\rho$ is smooth. We assume that $n\geq 3$, the simpler planar case is left for the reader. 

Let $a$ and $b \in \R^n\setminus B^n(2)$. We may assume that $|a|\le |b|$ and that $L(a,b)\cap B^n = \emptyset$. Otherwise, we consider an additional point $c\in S^{n-1}(2)$ so that $\left([a,c]\cup [c,b]\right)\cap B^n=\emptyset$. Indeed, we can take $c\in S^{n-1}(2)$ so that $[0,c]$ bisects the angle between $[0,a]$ and $[0,b]$ in the plane $P(0,a,b)$. 

Then, since $\rho = du$ outside $B^n$, 
\begin{eqnarray*}
\hop_y\rho(b)-\hop_y\rho(a) &=& \int_{[y,b]} \rho - \int_{[y,a]} \rho - \int_{[a,b]} \rho +u(b)-u(a) \\
&=& \int_{[y,b,a]} d\rho + u(b)-u(a) 
\end{eqnarray*}
for all $y \in \R^n$. Thus
\begin{eqnarray*}
|\cT\rho(b)-\cT\rho(a)-(u(b)-u(a))| & \le & \int_{B^n} \varphi(y) \left( \int_{P(y,a,b)} |d\rho| \right)\dy \\
& \le & ||\varphi||_{\infty} \int_{B^n} \left( \int_{P(y,a,b)} |d\rho| \right)\dy. 
\end{eqnarray*}

Let $\psi$ be a Euclidean isometry so that $\psi(0)=a$ and $\psi L(0,e_n) = L(a,b)$. Then $B=\psi^{-1}(B^n)$ is a ball in $A(|a|-1,|a|+1)$. 

Since $\psi$ is an isometry, we have
\begin{equation}
\label{voimaa}
\int_{B^n} \left( \int_{P(y,a,b)} |d\rho| \right)\dy 
= \int_B \left( \int_{P(x,0,e_n)} |d\psi^*\rho|\right) \dx
= \int_B \Psi(x) \dx,
\end{equation}
where $\Psi \colon \R^n \to \R$ is defined by 
\[
\Psi(x) = \int_{P(x,0,e_n)} |d\psi^*\rho|.
\]

To estimate the integral in \eqref{voimaa}, we observe first that, given $x\in \R^n \setminus L(0,e_n)$, we have $\Psi(x)=\Psi(y)$ for $y \in P(x,0,e_n)$. 
Then, writing $x=(s,\phi,x_n)$ in cylindrical coordinates, we see that $\Psi(x)=\Psi(\phi)$. We denote $p=(p_1,\ldots, p_n)=\psi^{-1}(0)$. Then 
\begin{eqnarray*}
\int_B \Psi(x)\dx &\leq& C \int_{p_n-1}^{p_n+1} \int_{S^{n-2}}\int_{|a|-1}^{|a|+1}s^{n-2}\Psi(\phi) \ds \dphi \dx_n \\ 
&\leq& C|a|^{n-2}\int_{S^{n-2}} \Psi(\phi)\dphi. 
\end{eqnarray*}
We write $P(x,0,e_n)=P(\phi)$. Then, as  
$$
\Psi(\phi)=\int_{P(\phi)} |d\psi^*\rho|= \int_{-\infty}^{\infty}\int_{-\infty}^{\infty} |d\psi^*\rho(s,\phi,x_n)| \ds\dx_n, 
$$
another application of cylindrical coordinates yields 
\begin{eqnarray*}
|a|^{n-2} \int_{S^{n-2}} \Psi(\phi)\dphi &\leq& C |a|^{n-2}\int_B \frac{|d\psi^*\rho(x)|}{\dist(x,L(0,e_n))^{n-2}}\dx \\  &\leq& C \norm{d\rho}_1. 
\end{eqnarray*}
The claim follows by combining the estimates. 
\end{proof}


\section{Degree estimate and Proof of Theorem \ref{thm:main}}
\label{sec:proof_of_1}

\begin{lem}
\label{lem:liimaus_identtiseen} 
There exists $\varepsilon=\varepsilon(n)>0$ so that if $\rho$ is a $W_{p,n/2}$-frame, $p>n$, $n \geq 3$, satisfying $\norm{d\rho}_1 \le \varepsilon$, and if 
$\rho = dx$ in $\R^n\setminus B^n$, then
\[
\deg(y, \cT \rho, B^n(2))\le 1
\]
for all $y \in \R^n\setminus \cT\rho S^{n-1}(2)$. 
\end{lem}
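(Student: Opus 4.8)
The idea is to compare $\cT\rho$ with $\cT(dx)$, which on $B^n(1/4)$ agrees with the identity (up to an additive constant) by the chain homotopy identity, and to use Lemma~\ref{lemma:cont} to control the discrepancy on $\R^n\setminus B^n(2)$. Write $u(x)=x$ (coordinatewise), so $\rho = du$ in $\R^n\setminus B^n$. Applying Lemma~\ref{lemma:cont} componentwise gives
\[
|\cT\rho(x)-\cT\rho(y)-(x-y)| \le C(n)\,\norm{d\rho}_1 \le C(n)\,\varepsilon
\]
for almost every $x,y\in\R^n\setminus B^n(2)$. By continuity of $\cT\rho$ (it lies in $W^{1,p}_\loc$ with $p>n$, hence is continuous after a redefinition on a null set) the inequality holds for \emph{all} $x,y\in\R^n\setminus B^n(2)$. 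Fixing a basepoint $x_0\in S^{n-1}(2)$ and setting $c=\cT\rho(x_0)-x_0$, this says $|\cT\rho(x)-(x+c)|\le C(n)\varepsilon$ on $\R^n\setminus B^n(2)$, in particular on $S^{n-1}(2)$.

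Now I would run a homotopy argument on $S^{n-1}(2)=\partial B^n(2)$. Let $H(t,x)=(1-t)\,\cT\rho(x)+t\,(x+c)$ for $x\in\overline{B^n(2)}$. On $S^{n-1}(2)$ we have $|H(t,x)-(x+c)|\le C(n)\varepsilon$ for every $t\in[0,1]$. Choose $\varepsilon=\varepsilon(n)$ small enough that $C(n)\varepsilon<1$. Then for any $y$ with $|y-(x_0+c)|\ge$ (an appropriate threshold)... more precisely: if $y\notin \cT\rho\,S^{n-1}(2)$, I must show $\deg(y,\cT\rho,B^n(2))\le 1$. Split into two cases according to the position of $y$ relative to the sphere $S^{n-1}(2)+c$. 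If $\dist(y, S^{n-1}(2)+c) > C(n)\varepsilon$, then $y\notin H([0,1]\times S^{n-1}(2))$, so by homotopy invariance \eqref{haka}, $\deg(y,\cT\rho,B^n(2))=\deg(y,\,x\mapsto x+c,\,B^n(2))$, which equals $1$ if $y\in B^n(2)+c$ and $0$ otherwise; in either case it is $\le 1$. If instead $y$ lies within distance $C(n)\varepsilon$ of $S^{n-1}(2)+c$ but $y\notin \cT\rho\,S^{n-1}(2)$, then $y$ lies in the "annular collar" $\{x : \dist(x,S^{n-1}(2)+c)\le C(n)\varepsilon\}$, which (for $C(n)\varepsilon<1$) is contained in the open annulus $A(1,4)+c$, and is in particular disjoint from the ball $B^n(1)+c$. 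Here I would argue: by excision \eqref{arka} applied to $U=B^n(2)\setminus\overline{B^n(1)}$ inside $G=B^n(2)$, or more directly, note $\deg(y,\cdot,B^n(2))$ is constant on each component of $\R^n\setminus\cT\rho\,S^{n-1}(2)$; since the collar is connected (an annular shell around a sphere in dimension $n\ge 3$) and contains points $y'$ with $\dist(y',S^{n-1}(2)+c)>C(n)\varepsilon$ to which the first case applies, giving value $\le 1$ there, the value at $y$ is also $\le 1$.

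The main obstacle I anticipate is the topological bookkeeping in the second case: ensuring the "collar" region $\{\dist(\cdot,S^{n-1}(2)+c)\le C(n)\varepsilon\}\setminus \cT\rho\,S^{n-1}(2)$ is handled correctly, i.e.\ that every component of $\R^n\setminus \cT\rho\,S^{n-1}(2)$ meeting this collar also meets the "far" region where the homotopy argument directly gives the bound. This uses $n\ge 3$ (so that the complement of a sphere-like set has the expected two components and the collar is connected), and it uses that $\cT\rho\,S^{n-1}(2)$ is contained in the $C(n)\varepsilon$-neighborhood of $S^{n-1}(2)+c$. A clean way to package this: observe that $\R^n\setminus\cT\rho\,S^{n-1}(2)$ has exactly one bounded component (since $\cT\rho\,S^{n-1}(2)$ is a compact connected set close to a round sphere) on which the degree can be nonzero, the unbounded component having degree $0$; on the bounded component pick any point $y_*$ far inside — concretely $y_*=x_0+c$ pushed inward, e.g.\ $c$ itself if $|c|$-considerations allow, or just a point at distance $>C(n)\varepsilon$ from $S^{n-1}(2)+c$ inside $B^n(2)+c$ — and compute $\deg(y_*,\cT\rho,B^n(2))=\deg(y_*,x\mapsto x+c,B^n(2))=1$ by \eqref{haka}. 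Hence the degree is $1$ on the bounded component and $0$ on the unbounded one, proving $\deg(y,\cT\rho,B^n(2))\le 1$ for all admissible $y$. I would make the choice of $\varepsilon(n)$ explicit as $\varepsilon(n)=1/(2C(n))$ with $C(n)$ the constant from Lemma~\ref{lemma:cont}, so that the collar stays inside $A(1,4)+c$ and all the above separations are strict.
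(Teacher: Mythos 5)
Your first case (points $y$ at distance greater than $C(n)\varepsilon$ from $S^{n-1}(2)+c$) is fine and is exactly the paper's homotopy step. The gap is in the second case, which is the actual content of the lemma. From the $C^0$ estimate of Lemma \ref{lemma:cont} alone you cannot conclude that $\R^n\setminus \cT\rho\,S^{n-1}(2)$ has exactly one bounded component, nor that every component meeting the collar $\{\dist(\cdot,S^{n-1}(2)+c)\le C(n)\varepsilon\}$ also contains a point to which the first case applies. A map of the sphere that is uniformly close to the inclusion can still produce small ``bubble'' components of the complement, entirely inside the collar, on which the degree is $\ge 2$: already in the plane the curve $\theta\mapsto 2e^{i\theta}+\delta e^{iN\theta}$ is $\delta$-close to the circle but has winding number $2$ inside each of its $N$ small loops, and analogous high-frequency, small-amplitude perturbations exist for $n\ge 3$. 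So sup-norm closeness to $x\mapsto x+c$ on $S^{n-1}(2)$ simply does not imply $\deg\le 1$ near the image sphere, and the purely topological bookkeeping you propose cannot close this case.

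This is precisely where the paper uses more than the $C^0$ bound: since $\rho=dx$ off $B^n$ and points of $S^{n-1}(2)$ are at distance $\ge 1$ from $\spt d\rho\subset \bar B^n$, the kernel of $\cT$ gives the pointwise estimate $|\cT d\tilde\rho(x)|\le C\norm{d\tilde\rho}_1$ for $x\in S^{n-1}(2)$, i.e. a derivative-level bound $|d\cT\tilde\rho - dx|\le C\norm{d\rho}_1$ on the sphere (for a smooth approximation $\tilde\rho$). From this the radial projection $g(x)=\tilde f(2x)/|\tilde f(2x)|$, $\tilde f=\cT\tilde\rho$, has $J_g>1/2$ on $S^{n-1}$, while $\deg g=1$ forces $\int_{S^{n-1}}J_g=|S^{n-1}|$; if some $y$ in the collar had $\deg(y,\tilde f,B^n(2))\ge 2$, then $N(z,g)\ge 2$ on a set of positive measure and the area formula would give $\int_{S^{n-1}}|J_g|>|S^{n-1}|$, a contradiction. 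To repair your argument you would have to import this quantitative ingredient (or something equivalent exploiting $\norm{d\rho}_1\le\varepsilon$ beyond its $C^0$ consequence); as written, the second case does not follow.
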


\begin{proof}
By Lemma \ref{lemma:cont},
\[
|\cT\rho(x)-\cT\rho(2e_1) - (x-2e_1)| \le C \norm{d\rho}_1
\]
for $x\in S^{n-1}(2)$. Suppose from now on that $C \norm{d\rho}_1 < 1/8$.

Let $f\colon \R^n \to \R^n$ be the mapping $f(x) = \cT\rho(x) + (2e_1 - \cT\rho(2e_1))$. We denote $v=2e_1-\cT\rho(2e_1)$. Since
\[
\deg(y+v, f, B^n(2)) = \deg(y, \cT\rho, B^n(2))
\]
for all $y \not \in \cT\rho S^{n-1}(2)$, the claim of the lemma holds if and only if $\deg(z,f,B^n(2))\le 1$ for all $z\not \in fS^{n-1}(2)$.

Since
\begin{equation}
\label{eq:f_to_id}
|f(x)-x| = |\cT\rho(x)-\cT\rho(2e_1) - (x-2e_1)| < C\norm{d\rho}_1 < 1/8
\end{equation}
for $x\in S^{n-1}(2)$, we have, by a homotopy argument,
\begin{equation} 
\label{eq:f_id}
\deg(y,f,B^n(2)) = \deg(y,\id,B^n(2))
\end{equation}
for $y \not \in A(15/8,17/8)$. Moreover,
\begin{equation}
\label{eq:f_dist}
\left| |f(x)| -2 \right| < C\norm{d\rho}_1 < 1/8
\end{equation}
on $S^{n-1}(2)$. 

Suppose now that there exists $y \in A(15/8,17/8)$ so that
\begin{equation}
\label{eq:f_y}
\deg(y,f,B^n(2))\ge 2.
\end{equation}
By continuity, we can fix $r>0$ so that $\deg(y',f,B^n(2))\ge 2$ for every $y'\in B^n(y,r)$.

By density of smooth frames in $W_{p,q}$ and continuity of $\cT \colon L^p(\bigwedge^1 B^n(2)) \to W^{1,p}(B^n(2))$, we may fix a smooth frame $\tilde \rho$ so that $\tilde f = \cT \tilde \rho$ satisfies \eqref{eq:f_to_id}, and hence also \eqref{eq:f_dist}, in place of $f$ and $\deg(y',\tilde f,B^n(2))\ge 2$ for $y'\in B^n(y,r/2)$. We may also assume that $\tilde \rho = \rho = dx$ on $\R^n\setminus B^n$.

By \eqref{eq:f_to_id}, the mapping $g \colon S^{n-1} \to S^{n-1}$, 
\[
g(x) = \frac{\tilde f(2x)}{|\tilde f(2x)|},
\]
is well-defined and smooth. 

We show that $J_g \ge 0$ almost everywhere on $S^{n-1}$. This contradicts $\deg(y,f,B(2))\ge 2$ and the claim follows.
Indeed, since $g$ is homotopic to $\id \colon S^{n-1} \to S^{n-1}$, we have, by the degree theory,
\[
\int_{S^{n-1}} J_g = \deg(g) |S^{n-1}| = |S^{n-1}|. 
\]
Since $\deg(y',\tilde f,B^n(2))\ge 2$ for $y'\in B^n(y,r/2)$, there exists a set $E\subset S^{n-1}$ of positive $\haus^{n-1}$-measure so that $\#\left(g^{-1}(z)\right) \ge  2$ for $z \in E$. Hence, by the change of variables,
\begin{eqnarray*}
\int_{S^{n-1}} |J_g| &=& \int_{S^{n-1}} N(z,g) \dhaus^{n-1}(z) \\
&=& \int_{S^{n-1}\setminus E} N(z,g)\dhaus^{n-1}(z) + \int_E N(z,g)\dhaus^{n-1}(z) \\
&>& |S^{n-1}| = \int_{S^{n-1}} J_g.
\end{eqnarray*}
This contradicts the non-negativity of $J_g$. 

It remains to show the non-negativity of $J_g$. We denote 
\[
\omega_0= \sum_{j=1}^n (-1)^{j+1} \frac{x_j}{|x|^n} dx_1\wedge \ldots \wedge \widehat{dx_j}\wedge \ldots\wedge dx_n = \sum_{j=1}^n \frac{x_j}{|x|^n} (\star dx_j). 
\]
Here $\star$ is the Hodge star operator. Then we have 
\begin{eqnarray*}
|1-J_g(x)| &=& \left|(\id^*-g^*)\omega_0\right| 
= \left| \sum_{j=1}^n \left( x_j (\star dx_j) - g^*\left(\frac{y_j}{|y|^n} (\star dy_j)\right)\right) \right|\\
&=& \left| \sum_{j=1}^n \left( x_j (\star dx_j) - (\tilde f_j/|\tilde f|^n) \tilde f^*(\star dy_j)\right) \right|\\
&\le& \sum_{j=1}^n \left| (x_j-\tilde f_j/|\tilde f|^n) \star dx_j + (\tilde f_j/|\tilde f|^n) (\star dx_j-\tilde f^*(\star dy_j)) \right| \\
&\le& n \left( M_1 + M_2 \right),
\end{eqnarray*}
where
\[
M_1 = \max_j \left|x_j-\frac{\tilde f_j}{|\tilde f|^n} \right|
\] 
and
\[
M_2 = \max_j \frac{\tilde{f}_j}{|\tilde{f}|^n}\left| \star dx_j - \tilde f^*(\star dy_j)\right|.
\] 

To estimate $M_1$ we observe that, by \eqref{eq:f_dist} and \eqref{eq:f_to_id},
\[
\left| x_j - \frac{\tilde f_j}{|\tilde f|^n}\right| \le \left| 1 - \frac{1}{|\tilde f|^n}\right| |x_j| + \frac{|x_j - \tilde f_j|}{|\tilde f|^n} \le C \norm{d\rho}_1 
\]
on $S^{n-1}(2)$, where $C=C(n)$.

To estimate $M_2$, we observe first that, on $\R^n\setminus B^n$, we have
\begin{eqnarray*}
\left| \star dx_j - \tilde f^*(\star dy_j) \right| &\le& |dx - d\tilde f| =  |dx - d\cT \tilde \rho| 
= | dx - \tilde \rho + \cT d\tilde \rho| \\
&=& |\cT d\tilde \rho|.
\end{eqnarray*}
Since
\begin{eqnarray*}
|\cT d \tilde\rho(x)|&=&\left|\int_{B^n} \varphi(y)\hop_y d\tilde\rho(x)\dy\right|  \\
&\le& C \int_{S^{n-1}(x,4)} \int_0^1 |d\tilde\rho(y+t(x-y))| \dt \dhaus^{n-1}(y) \\
&\le& C \int_{\R^n} \frac{|d\tilde\rho(y)|}{|x-y|^{n-1}} \\ 
&=& C\int_{B^n} \frac{|d\tilde\rho(y)|}{|x-y|^{n-1}}\dy \le C\norm{d\tilde\rho}_1
\end{eqnarray*}
for $x\in S^{n-1}(2)$, we have that
\[
M_2 \le C \norm{d\rho}_1,
\]
where $C=C(n)$.
We choose $\varepsilon=\varepsilon(n)>0$ so that $M_1 + M_2 < 1/(2n)$ for $\norm{d\rho}_1 < \varepsilon$. Then $J_g > 1/2$. The claim follows.
\end{proof}

\begin{proof}[Proof of Theorem \ref{thm:main}]
Suppose first that $r=1$. For brevity, we denote $f = \cT \rho \colon \R^n \to \R^n$. 

Since $f = \cT\rho = f_0 + c$, where $c\in \R^n$, on $B^n(1/2)$, we have $\deg(y,f,B^n(1/2)) = \deg(y-c,f_0, B^n(1/2))$ for $y \not \in fS^{n-1}(1/2)$. Then, by Lemma \ref{lemma:ipe},
\begin{equation}
\begin{split}
& \int_{\R^n} \max\{ \deg(y,f_0,B^n(1/2)) - 1, 0\} \dy \\
&\qquad = \int_{\R^n} \max\{ \deg(y,f,B^n(1/2)) -1,0\} \dy \\
&\qquad \le \int_{B^n(1/2)} |J_f| 
\le C \left( 1 + \norm{d\rho}_{n/2}\right)^n,
\end{split}
\end{equation}
where $C=C(n)>0$.

If $\norm{d\rho}_{n/2} \ge \varepsilon$, where $\varepsilon=\varepsilon(n,K)$ is the constant in Lemma \ref{lem:liimaus_identtiseen}, the claim follows. Thus we may assume that $\norm{d\rho}_{n/2} < \varepsilon$. Since, for every $\varepsilon>0$, the $n$-measure $|fS^{n-1}(2(1+t))|=|fS^{n-1}((1+t)/2)|=0$ for almost every $t\in (-\varepsilon,\varepsilon)$,  we may assume that $|fS^{n-1}(2)|=|fS^{n-1}(1/2)|=0$ by applying a rescaling to $\rho$ if necessary.

Since
\[
\deg(y,f,A(1/2,2)) = \deg(y,f,B^n(2)) - \deg(y,f,B^n(1/2))
\]
for $y \not \in fS^{n-1}(1/2)\cup fS^{n-1}(2)$, we have, by Lemma \ref{lem:liimaus_identtiseen}, $\deg(y,f,B^n(2))\le 1$ for all $y\in \R^n\setminus fS^{n-1}(2)$. 
Thus, by Proposition \ref{prop:hius}, 
\begin{equation}
\begin{split}
& \int_{\R^n} \max\{ \deg(y,f_0,B^n(1/2)) - 1, 0\} \dy \\
&\qquad = \int_{\R^n} \max\{ \deg(y,f,B^n(1/2)) -1,0\} \dy \\
&\qquad \le - \int_{\Omega_-} \deg(y,f,A(1/2,2))\dy \le C\norm{d\rho}^n_{n/2}, 
\end{split}
\end{equation}
where $C=C(n)>0$ and $\Omega_- = \{ y\in \R^n\colon \deg(y,f,A(1/2,2))<0\}$, as in Section \ref{sec:estimate}.

For general $r>0$ the argument above can be applied to $\rho' = \left( \lambda_r^*\rho\right)/r$ and $f_0' = (f_0 \circ \lambda_r)/r$. The proof is complete.
\end{proof}



\section{Quasiconformal energy minimizers}
\label{sec:min}

In this section we consider the minimization problem for the $q$-energy of extension frames. We obtain Theorem \ref{thm:blah} in two parts. The existence of minimizers is shown in Theorem \ref{thm:eu}. For the higher integrability of minimizers, we derive an Euler-Lagrange equation (Lemma \ref{lemma:EL}) and a Caccioppoli type inequality (Corollary \ref{cor:Caccioppoli}) for this variational problem. We then establish a reverse H\"older inequality (Theorem \ref{thm:HI}) which yields the higher integrability by Gehring's lemma.

We denote by $\langle \cdot, \cdot \rangle$ the inner product
\[
\langle X,Y \rangle = \frac{1}{m} \trace \left( X^t Y \right)
\]
for $(m\times k)$-matrices and by $|\cdot|_2$ the (normalized) Hilbert-Schmidt norm 
\[
|X|_2^2 = \langle X,X \rangle = \frac{1}{n} \sum_{i=1}^m \sum_{j=1}^k X_{ij}^2.
\]
In this section, we identify frames with matrix fields, and use the inner product $\langle \cdot, \cdot \rangle$ and the norm $|\cdot|_2$ also for frames.

Let $q>n/2$ and $0<r<R<\infty$. Let $\rho_0$ and $\rho_1$ be $W_{n,q}^{\loc}$-frames in $B^n(r)$ and $\R^n\setminus B^n(R)$, respectively. The following lemma shows that $\rho_0$ and $\rho_1$ can be quasiconformally connected if they have quasiconformal extensions to the neighborhoods of $S^{n-1}(r)$ and $S^{n-1}(R)$, respectively.

\begin{lem}
\label{lemma:competitors}
Let $0<r<r'<R'<R<\infty$ and let $\rho_0$ and $\rho_1$ be $W_{n,q}^{\loc}$-frames in $B^n(r')$ and $\R^n \setminus B^n(R')$, respectively, so that $\rho_0$ is $K$-quasiconformal in $A(r,r')$ and $\rho_1$ is $K$-quasiconformal in $A(R',R)$. Then there exists a $W_{n,q}$-frame $\rho$ so that $\rho\in \cE_{q,\tilde K}(\rho_0,\rho_1;A(r,R))$, where $\tilde K = \tilde K(n,K,r,r',R,R')$.
\end{lem}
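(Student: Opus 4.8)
The plan is to build the connecting frame $\rho$ by an explicit interpolation in the overlap regions, gluing $\rho_0$ on $B^n(r)$ to $\rho_1$ on $\R^n\setminus B^n(R)$ through two annular transition layers, and then to check the quasiconformality condition \eqref{eq:QC} with a controlled constant. First I would fix a smooth cutoff function $\eta\colon\R^n\to[0,1]$ with $\eta\equiv 1$ on $B^n(r)$, $\eta\equiv 0$ on $\R^n\setminus B^n(r')$, and $|\nabla\eta|\le C/(r'-r)$, and similarly a cutoff adapted to the pair $R'<R$. The natural first attempt, $\rho = \eta\rho_0 + (1-\eta)\sigma$ where $\sigma$ is some fixed reference frame (e.g.\ $dx$) on the middle annulus, fails because $\rho_0$ need not be close to $dx$; so instead I would interpolate between $\rho_0$ and $\rho_1$ only after observing that the middle annulus $A(r',R')$ is away from both transition layers, and there we are free to choose $\rho$ to be any fixed $K$-quasiconformal $W_{n,q}$-frame — for instance we may take $\rho_0$ itself (it is defined on $B^n(r')$, so it restricts to the inner part of the middle annulus), or more symmetrically choose a linear-algebraic interpolation between the \emph{values} $\rho_0$ and $\rho_1$ rather than between the frames. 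The cleanest route: on $A(r,r')$ set $\rho = $ the frame obtained by interpolating, via the polar/geodesic path in the space of conformal-up-to-$K$ matrices, from $\rho_0$ on $S^{n-1}(r)$ to a fixed constant conformal frame $c_0\,\mathrm{(rotation)}\cdot dx$ on $S^{n-1}(r')$; do the same on $A(R',R)$ from a constant conformal frame on $S^{n-1}(R')$ to $\rho_1$; and on $A(r',R')$ interpolate between the two constant conformal frames.

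The key steps, in order, are: (1) reduce to the case $\rho_0,\rho_1$ smooth by the density of smooth frames in $W_{n,q}$ in the relevant annuli, so that the boundary traces on $S^{n-1}(r)$, $S^{n-1}(r')$, $S^{n-1}(R')$, $S^{n-1}(R)$ make pointwise sense; (2) on the innermost transition annulus $A(r,r')$, write points in polar coordinates $x=t\theta$ with $t\in[r,r']$, $\theta\in S^{n-1}$, and define $\rho(t\theta)$ to be a smooth path in $t$ from $\rho_0(r\theta)$ (at $t=r$, matching $C^0$) to the constant frame at $t=r'$, chosen so that at each $t$ the frame is $\tilde K$-quasiconformal — this is possible because the set of matrices $M$ with $|M|^n\le \tilde K\det M$ is path-connected (it deformation retracts onto the conformal matrices $\{cO : c>0, O\in SO(n)\}$, which is itself connected), and one can carry out the interpolation keeping the distortion controlled by $\tilde K(n,K)$; (3) symmetrically define $\rho$ on $A(R',R)$; (4) on $A(r',R')$, join the two constant conformal frames, again staying inside the $\tilde K$-quasiconformal cone; (5) verify that the resulting $\rho$ lies in $W_{n,q}$: the $L^n$ bound on $\rho$ and the $L^q$ bound on $d\rho$ on each annulus follow from smoothness of the interpolation together with the $W_{n,q}^\loc$ bounds on $\rho_0$ near $S^{n-1}(r)$ and $\rho_1$ near $S^{n-1}(R)$, with the transversal derivative estimated using the finite radial length and $|\nabla\eta|$-type bounds; (6) track the dependence of $\tilde K$ and conclude $\tilde K = \tilde K(n,K,r,r',R,R')$.

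The main obstacle I expect is step (2)–(4): carrying out the matrix interpolation so that the quasiconformal distortion stays bounded by a constant depending only on $n$ and $K$ (and the radii). One has to be careful that the path from $\rho_0(r\theta)$ — which is only known to satisfy $|M|^n\le K\det M$ at each point — to a \emph{fixed} constant conformal frame genuinely stays in a $\tilde K$-quasiconformal cone; a convenient device is to first apply, on $S^{n-1}(r)$, the polar decomposition $\rho_0 = P\,U$ into a positive symmetric part and a conformal part, deform $U$ along $SO(n)$ (connected, no distortion cost) to the identity, and deform $P$ along the straight segment to a multiple of the identity, noting that along such a segment the eigenvalue ratios — hence the distortion — never exceed those of $P$ itself; the $\theta$-dependence of this path is smooth because $\rho_0$ is smooth and $P,U$ depend smoothly on $\rho_0$ away from the (empty, by quasiconformality) degenerate locus. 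Once the distortion is under control, the $W_{n,q}$ membership and the final constant-tracking are routine. Note that the statement only claims $\rho\in W_{n,q}$ (not $W_{n,q}^\loc$ with the sharper $K$), which is exactly what this gluing delivers, and that \eqref{eq:QC} is only required on $A(r,R)$, so no compatibility beyond $C^0$-matching at $S^{n-1}(r)$ and $S^{n-1}(R)$ is needed.
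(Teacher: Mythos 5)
Your construction has genuine gaps, and they concentrate exactly where you predicted the difficulty would be. First, the reduction in step (1) is not available: the lemma demands $\rho|B^n(r)=\rho_0|B^n(r)$ and $\rho|(\R^n\setminus \bar B^n(R))=\rho_1$ for the \emph{given} frames, so you cannot replace $\rho_0,\rho_1$ by smooth approximants -- the object you build would connect the approximants, not the data. Without smoothness, your scheme needs pointwise values $\rho_0(r\theta)$ on the sphere $S^{n-1}(r)$, but a $W_{n,q}$-frame ($L^n$ with $d\rho_0\in L^q$) has no such trace, and even in the smooth case the angular derivatives of the interpolated frame involve derivatives of $\rho_0$ along the sphere, which are not controlled by $\norm{\rho_0}_n+\norm{d\rho_0}_q$; so the claimed $L^q$ bound for $d\rho$ in step (5) does not follow. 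Second, the statement that the degenerate locus is ``empty, by quasiconformality'' is false: condition \eqref{eq:QC} is satisfied by the zero matrix (both sides vanish), so a $K$-quasiconformal frame may vanish on a large set, and inside $B^n(r)$ no quasiconformality is assumed at all; the polar decomposition $\rho_0=PU$ and the retraction onto conformal matrices break down there. Third, even for smooth nonvanishing data, connectivity of $SO(n)$ only joins a \emph{single} matrix to the identity; to send the whole family $\theta\mapsto U(\theta)$, $\theta\in S^{n-1}$, to a constant frame while staying in the nondegenerate cone (which deformation retracts onto $\{cO\}\simeq SO(n)\times(0,\infty)$) you need this sphere map to be null-homotopic, and $\pi_{n-1}(SO(n))$ is nontrivial in general; since $\rho_0$ is only quasiconformal on $A(r,r')$, there is no extension-over-the-ball argument to kill the class.

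The paper's proof avoids all three issues by never using boundary traces and by gluing \emph{through the zero frame}: set $r_0=(r'+R')/2$, pull $\rho_0$ back from $A(r,r')$ to $A(r,r_0)$ by a radial bi-Lipschitz map $\lambda_0$ (and similarly $\rho_1$ by $\lambda_1$ onto $A(r_0,R)$), and multiply by a scalar cutoff $\theta(|x|)$ equal to $1$ near $S^{n-1}(r)$ and $S^{n-1}(R)$ and vanishing near $S^{n-1}(r_0)$. Because \eqref{eq:QC} is homogeneous of degree $n$ on both sides, multiplication by a nonnegative function preserves it, and the radial pullbacks only worsen the constant to $\tilde K(n,K,r,r',R,R')$; the $L^q$ bound on $d\rho$ comes from the product rule, using values of $\rho_i$ and $d\rho_i$ on full annuli rather than on spheres. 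In effect the cone $\{M:|M|^n\le \tilde K\det M\}$ is star-shaped about $0$, so contracting to $0$ in the middle annulus is the cheap interpolation; your insistence on interpolating to a nonzero constant conformal frame is what creates both the regularity and the topological obstructions. If you want to salvage your outline, replace steps (2)--(4) by this scalar-cutoff-plus-radial-pullback device.
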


\begin{proof}
Let $r_0 = (r'+R')/2$. We define mappings $\lambda_0 \colon A(r,r_0) \to A(r,r')$ and $\lambda_1 \colon A(r_0,R) \to A(R',R)$ by
\[
\lambda_0(x)= \left( \frac{r'-r}{r_0-r}(|x|-r) + r\right) \frac{x}{|x|}
\]
and 
\[
\lambda_1(x) = \left( \frac{R-R'}{R-r_0}(|x|-r_0) +R'\right)\frac{x}{|x|}.
\]
Let also $\theta \colon [0,\infty) \to [0,1]$ be a smooth function so that $\theta(t)=1$ for $t<r'$ and $t>R'$, $\theta(t) = 0$ in a neighborhood of $r_0$, and that $|d\theta|\le 3/(R'-r')$. We set $\rho$ to be the frame
\[
\rho(x) = \left\{ \begin{array}{ll}
\rho_0, & x\in B^n(r) \\
\theta(|x|) (\lambda_0^* \rho_0)(x), & x \in A(r,r_0) \\
\theta(|x|) (\lambda_1^* \rho_1)(x), & x \in A(r_0,R) \\
\rho_1, & x\in \R^n\setminus B^n(R).
\end{array}\right.
\]
Since $\rho_0$ and $\rho_1$ are $K$-quasiconformal in $A(r,r')$ and $A(R',R)$, respectively, frames $\lambda_i^* \rho_i$ are $\tilde K$-quasiconformal for $\tilde K = \tilde K(n,K,r,r',R,R')$ for $i=0,1$. Since $|d\rho(x)| \le |d\theta(|x|)| |\lambda_i^* \rho_i(x)|+ |\theta(x)||\lambda_i^*d\rho_i(x)|$ for $i=0,1$ in $A(r,r_0)$ and $A(r_0,R)$, respectively, we have that $d\rho \in L^q(\bigwedge^2 \R^n)$. Thus $\rho$ is a $W_{n,q}$-frame.  
\end{proof}

In what follows, we assume that $\cE_{q,K}(\rho_0,\rho_1;A(r,R))$ is non-empty and we consider the minimization problem
\begin{equation}
\label{eq:min_2}
I_{q,K}(\rho_0,\rho_1,A(r,R)) = \inf_{\rho \in \cE_{q,K}(\rho_0,\rho_1;A(r,R))} \int_{A(r,R)} |d\rho|_2^q, 
\end{equation}
$q>n/2$. In the forthcoming discussion, we use the observation that for every frame $\rho \in \cE_{q,K}(\rho_0,\rho_1;A(r,R))$ there exists an affine subspace of frames conformally equivalent to $\rho$; more precisely, $(1+h)\rho \in \cE_{q,K}(\rho_0,\rho_1;A(r,R))$ for all $\rho\in \cE_{q,K}(\rho_0,\rho_1;A(r,R))$ and all $h \in C^\infty_0(A(r,R))$ satisfying $h \ge -1$.

\begin{thm}
\label{thm:eu}
The minimization problem \eqref{eq:min_2} admits a minimizer $\rho \in \cE_{q,K}(\rho_0,\rho_1;A(r,R))$, i.e.,\,there exists $\rho \in \cE_{q,K}(\rho_0,\rho_1;A(r,R))$ so that 
\[
\int_{A(r,R)} |d\rho|_2^q = I_{q,K}(\rho_0,\rho_1,A(r,R)).
\]
\end{thm}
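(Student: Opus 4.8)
The plan is to use the direct method in the calculus of variations. Let $I = I_{q,K}(\rho_0,\rho_1,A(r,R))$, which is finite because $\cE_{q,K}(\rho_0,\rho_1;A(r,R))$ is nonempty, and choose a minimizing sequence $(\rho^{(j)})$ in $\cE_{q,K}(\rho_0,\rho_1;A(r,R))$ with $\int_{A(r,R)} |d\rho^{(j)}|_2^q \to I$. The first step is to extract a weakly convergent subsequence. Since $d\rho^{(j)}$ is bounded in $L^q(\bigwedge^2 A(r,R))$ and $q > n/2 \ge 1$, and since the frames agree with the fixed $\rho_0$ on $B^n(r)$ and $\rho_1$ on $\R^n\setminus B^n(R)$, a Sobolev-Poincar\'e inequality for $W_{n,q}$-frames (obtained from the homotopy operator $\cT$ as in Section \ref{sec:Poinc}, applied after subtracting a fixed competitor so that the boundary values vanish) gives a uniform $L^{s}$-bound on $\rho^{(j)}$ itself for some $s>1$ on each fixed ball; combined with the $L^q$-bound on $d\rho^{(j)}$ this yields a uniform $W_{s,q}^{\loc}$-bound. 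Hence, passing to a subsequence, $\rho^{(j)} \rightharpoonup \rho$ and $d\rho^{(j)} \rightharpoonup d\rho$ weakly, with $\rho$ a $W_{n,q}^{\loc}$-frame. (If the Sobolev embedding used here requires $q<n$ one still gets a local bound; if $q\ge n$ one works directly with the $L^q$-bound on $d\rho^{(j)}$ and local $L^\infty$-type control.)

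The second step is to verify that the weak limit $\rho$ lies in $\cE_{q,K}(\rho_0,\rho_1;A(r,R))$, i.e.\ that the constraints pass to the limit. The boundary conditions $\rho|B^n(r)=\rho_0$ and $\rho|(\R^n\setminus B^n(R))=\rho_1$ are preserved because they are linear (the sequence is constant there, so the limit is too, almost everywhere). The quasiconformality inequality $|\rho|^n \le K J_\rho$ a.e.\ in $A(r,R)$ is the delicate one: $J_\rho = \star(\rho_1\wedge\cdots\wedge\rho_n)$ is an $n$-fold product of the components and is not weakly continuous in general, but here one should exploit that the constraint set is \emph{convex} in a suitable sense — or, more robustly, observe that the set $\{\rho' : |\rho'|^n \le K J_{\rho'} \text{ a.e.}\}$ is weakly closed. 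One way: the condition \eqref{eq:QC} is equivalent to $\rho$ being (a.e.) the differential frame of a $K$-quasiregular-type infinitesimal map, and the operator-norm/Jacobian inequality is preserved under weak $L^s$-convergence of the $\rho_i$ together with weak convergence of the minors, which holds because $s$ can be taken $>n$ after the a priori $L^p_\loc$ estimate (the last sentence of Theorem \ref{thm:blah}, or rather the higher-integrability machinery of the later lemmas) — but to avoid circularity I would instead use that along the minimizing sequence one may, by the averaged Poincar\'e operator, write $\rho^{(j)} = \rho^{(0)} + d u^{(j)} - \cT d\rho^{(j)} + (\text{correction})$ and pass to the limit using compactness of $\cT$. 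Cleanly: since $\{\rho': |\rho'|^n\le K J_{\rho'}\}$ is closed under a.e.\ convergence, it suffices to upgrade weak convergence to a.e.\ convergence, which the local compact embedding $W_{s,q}^{\loc}\hookrightarrow L^1_\loc$ provides on a further subsequence.

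The third step is lower semicontinuity of the energy: the functional $\rho'\mapsto \int_{A(r,R)} |d\rho'|_2^q$ is convex in $d\rho'$ (since $t\mapsto |t|_2^q$ is convex for $q\ge 1$), hence weakly lower semicontinuous with respect to weak $L^q$-convergence of $d\rho^{(j)}$. Therefore
\[
\int_{A(r,R)} |d\rho|_2^q \le \liminf_{j\to\infty} \int_{A(r,R)} |d\rho^{(j)}|_2^q = I.
\]
Combined with $\rho\in\cE_{q,K}(\rho_0,\rho_1;A(r,R))$ from step two, which forces $\int_{A(r,R)}|d\rho|_2^q \ge I$, we conclude $\int_{A(r,R)}|d\rho|_2^q = I$, so $\rho$ is a minimizer.

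I expect the main obstacle to be step two — showing the quasiconformality constraint \eqref{eq:QC} survives the passage to the weak limit, since the Jacobian $J_{\rho'}$ is a null Lagrangian that is only \emph{weakly continuous} when one has enough integrability of the $\rho_i$ (roughly $L^n$, and for products one wants a bit more). The resolution is the a priori local higher-integrability/compactness afforded by the Poincar\'e homotopy operator $\cT$: subtract a fixed competitor, apply the Sobolev--Poincar\'e inequality \eqref{eq:sp} to control $\rho^{(j)}$ in $L^{q^*}_\loc$ (or use that $d\rho^{(j)}$ bounded in $L^q$ with $q>n/2$ gives, via $\cT$, a bound on the $\rho^{(j)}$ strong enough that the minors $\rho^{(j)}_{i_1}\wedge\cdots\wedge\rho^{(j)}_{i_k}$ converge), extract an a.e.-convergent subsequence by Rellich, and then the pointwise inequality $|\rho^{(j)}|^n\le K J_{\rho^{(j)}}$ passes to the limit a.e.\ by Fatou-type reasoning (both sides converge a.e.). Once that is in place, steps one and three are standard.
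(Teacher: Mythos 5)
Your overall skeleton (minimizing sequence, weak compactness, closure of the constraints, weak lower semicontinuity of the convex energy) is the same as the paper's, and your step three is fine. But two of your key mechanisms fail as stated. First, the a priori bound on the minimizing sequence itself: you propose to subtract a fixed competitor and apply a Sobolev--Poincar\'e inequality via $\cT$ to bound $\rho^{(j)}$ in some $L^s_\loc$ by $\norm{d\rho^{(j)}}_q$. This cannot work, because for $1$-forms the exterior derivative has an enormous kernel: $\omega_M=M\,du$ with $u\in C^\infty_0(A(r,R))$ has $d\omega_M=0$ and vanishing boundary values but arbitrarily large $L^s$-norm. The chain homotopy $\id=d\cT+\cT d$ only controls the piece $\cT d\omega$; the piece $d\cT\omega$ is estimated by $\norm{\omega}_p$ itself, so no bound on $\norm{\rho^{(j)}}$ follows from $\norm{d\rho^{(j)}}_q$ and the boundary data alone. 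The paper's Lemma \ref{lemma:rho_bound} is precisely the substitute: it uses the quasiconformality \eqref{eq:QC} of each competitor (so that $\norm{\rho}_{n,A(r,R)}^n\le K\int J_\rho$), an isoperimetric/Stokes argument over spheres in $A(R,2R)$ where the frame equals the fixed $\rho_1$, and \eqref{eq:sp}, to get $\norm{\rho}_{n,A(r,R)}\le C(\norm{\rho}_{n,A(R,2R)}+\norm{d\rho}_{q,B^n(2R)})$. Without invoking the distortion inequality of the competitors, the uniform $L^n$-bound in your step one is unjustified.

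Second, the closure of the constraint \eqref{eq:QC}: your "clean" route is a compact embedding $W_{s,q}^{\loc}\hookrightarrow L^1_\loc$ giving a.e.\ convergence of a subsequence. No such compactness holds for partial Sobolev spaces of $1$-forms: $\omega_k=\cos(kx_1)\,dx_1$ is bounded in every $L^s$, is closed (so $d\omega_k=0$ is as good as it gets), converges weakly to $0$, and has no subsequence converging in $L^1_\loc$ or a.e. Rellich-type compactness would require control of the codifferential (Gaffney), which you do not have; and your alternative via the $L^p_\loc$, $p>n$, higher integrability is, as you yourself note, circular, since that is proved later only for minimizers via the Euler--Lagrange equation. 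The paper handles this step by compensated compactness: for frames bounded in $L^n$ with exterior derivatives bounded in $L^q$, $q>n/2$, the wedge products (hence $J_\rho$) are weakly continuous \cite[Theorem 5.1]{IwaniecT:Intenl}, and the distortion inequality passes to the weak limit without any pointwise convergence. So the gesture toward null Lagrangians is the right instinct, but the concrete argument you chose (a.e.\ convergence by Rellich plus Fatou) does not close the gap; you need the div-curl/compensated-compactness statement, and before that the quasiconformality-based $L^n$-bound of Lemma \ref{lemma:rho_bound}.
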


To this end, we would like to note that, since the minimization problem is considered in $\cE_{q,K}(\rho_0,\rho_1;A(r,R))$, standard convexity arguments are not at our disposal and the uniqueness of the minimizer is not guaranteed. 

We begin the proof of Theorem \ref{thm:eu} with the following lemma. We assume in what follows that $0<r<R<\infty$.
\begin{lem}
\label{lemma:rho_bound}
Let $q>n/2$, $n \geq 2$, and let $\rho$ be a $W_{n,q}^\loc$-frame in $\R^n$ so that $\rho$ is $K$-quasiconformal in $A(r,R)$. Then 
\[
\norm{\rho}_{n,A(r,R)} \le C \left( \norm{\rho}_{n,A(R,2R)} + \norm{d\rho}_{q,B^n(2R)} \right),
\]
where $C=C(n,K,q,R)>0$.
\end{lem}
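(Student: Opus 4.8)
Lemma \ref{lemma:rho_bound} asks for an $L^n$-bound on a $K$-quasiconformal frame over the annulus $A(r,R)$ in terms of its $L^n$-norm on the outer collar $A(R,2R)$ and its $L^q$-norm of the differential. Here is the approach I would take.

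The plan is to combine the pointwise quasiconformality inequality \eqref{eq:QC}, which controls $|\rho|^n$ by $K J_\rho$ a.e. in $A(r,R)$, with the fact that the Jacobian of $\rho$ is a null-Lagrangian modulo lower-order terms: by the chain homotopy identity \eqref{eq:cht}, $\rho = d\cT_{2R}\rho + \cT_{2R} d\rho$, so $J_\rho$ differs from $J_{\cT_{2R}\rho}$ by a sum of wedge products each involving at least one factor $\cT_{2R} d\rho$. First I would fix a slightly larger reference ball, say $B^n(2R)$, and write, on a sub-annulus $A(r',R)$ with $r<r'$ fixed, the integrand $\rho_1\wedge\cdots\wedge\rho_n$ as $d\cT_{2R}\rho_1\wedge\cdots\wedge d\cT_{2R}\rho_n$ plus error terms; exactly as in the proof of Proposition \ref{prop:hius} this gives
\[
\int_{G} J_\rho \le \int_{G} d\cT_{2R}\rho_1\wedge\cdots\wedge d\cT_{2R}\rho_n + C\sum_{k=1}^n \norm{\cT_{2R}d\rho}_{n,B^n(2R)}^k \norm{\rho}_{n,G}^{n-k}
\]
for suitable subsets $G$, and the Sobolev--Poincar\'e inequality \eqref{eq:sp} (valid since $q>n/2$, so $q^\ast\ge n$) bounds $\norm{\cT_{2R}d\rho}_{n,B^n(2R)}$ by $C(n,q,R)\norm{d\rho}_{q,B^n(2R)}$.

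Next I would control the leading integral $\int d\cT_{2R}\rho_1\wedge\cdots\wedge d\cT_{2R}\rho_n$ over the region between a sphere $S^{n-1}(\tau)$ with $\tau\in(R,2R)$ and $S^{n-1}(r')$. Since $d\cT_{2R}\rho_i$ is closed, this is really a boundary integral, evaluated against $\cT_{2R}\rho$, of the form $\int_{S^{n-1}(\tau)} \cT_{2R}\rho_1\, d\cT_{2R}\rho_2\wedge\cdots - (\text{same on the inner sphere})$; by Stokes and the standard degree/isoperimetric estimates — precisely the isoperimetric inequality \eqref{eq:iso} applied on $A(r',2R)$ together with \eqref{eq:sp} — this is dominated by $C\big(\norm{\rho}_{n,A(R,2R)}^n + \norm{d\rho}_{q,B^n(2R)}^n\big)$ for a.e. choice of $\tau$, exactly in the style of Lemma \ref{lemma:ipe}. (The inner contribution on $S^{n-1}(r')$ is harmless: either it is absorbed because $\rho$ restricted there is part of the fixed data and enters only after a further covering/iteration, or one simply estimates it by the isoperimetric inequality on a ball touching $S^{n-1}(r')$ from inside $A(r,R)$.)

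Putting these together yields $\norm{\rho}_{n,A(r',R)}^n \le CK\big(\norm{\rho}_{n,A(R,2R)}^n + \norm{d\rho}_{q,B^n(2R)}^n\big) + CK\sum_{k=1}^n \norm{d\rho}_{q,B^n(2R)}^k\norm{\rho}_{n,A(r',R)}^{n-k}$, and the same absorption argument used to derive \eqref{eq:r1} in Proposition \ref{prop:hius} — if $\norm{\rho}_{n,A(r',R)}$ exceeded a fixed multiple of the right-hand side's data terms we would get $\norm{\rho}^n < \norm{\rho}^n$ — removes the quadratic-in-$\norm{\rho}$ terms and gives the desired bound on $A(r',R)$. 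Finally, to reach the full annulus $A(r,R)$ I would iterate this collar estimate finitely many times, replacing the outer sphere by successively inner spheres (a chain of overlapping annuli $A(r_{j+1},r_j)$ covering $A(r,R)$ with $r_0=R$), each step feeding its output as the "outer collar" data of the next; after finitely many steps (a number depending only on $n$, $r$, $R$) one covers $\overline{A(r,R)}$, and the accumulated constant depends only on $n,K,q,R$.

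The main obstacle I anticipate is the handling of the inner boundary term $\int_{S^{n-1}(r')}$ cleanly: unlike in Lemma \ref{lemma:ipe}, here there is no ball available interior to the domain on which $\rho$ is a full differential, so one must be careful that the iteration genuinely terminates and that the constant does not blow up — this is why the statement only claims a constant depending on $R$ (and implicitly on $r$ through $R/r$), and why one works with a fixed intermediate radius $r'$ and a finite chain rather than a single application. Everything else is a repackaging of the ingredients already assembled in Sections \ref{sec:Poinc} and \ref{sec:estimate}.
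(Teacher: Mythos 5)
Your skeleton --- expand via the chain homotopy identity \eqref{eq:cht}, estimate $\norm{\cT_{2R} d\rho}_{n}$ by $\norm{d\rho}_{q}$ through the Sobolev--Poincar\'e inequality \eqref{eq:sp} (this is where $q>n/2$ enters), convert the leading term $\int d\cT_{2R}\rho_1\wedge\cdots\wedge d\cT_{2R}\rho_n$ into a boundary integral by Stokes, and close with the absorption trick of \eqref{eq:r1} --- is the paper's, and those pieces are sound. The genuine gap is exactly the point you flag and then wave away: the inner boundary. The paper never integrates over an annulus. It applies Stokes to the primitive $\omega=\sum_j(-1)^j\,\cT_{2R}\rho_j\, d\cT_{2R}\rho_1\wedge\cdots\wedge\widehat{d\cT_{2R}\rho_j}\wedge\cdots\wedge d\cT_{2R}\rho_n$ over \emph{full balls} $B^n(t)$, $t\in(R,2R)$, so the only boundary term is on the outer sphere $S^{n-1}(t)$; averaging over $t\in(R,2R)$ places that term in the outer collar $A(R,2R)$, where $\norm{\rho}_{n}$ is data, while the quasiconformal lower bound $\int_{B^n(t)} d\cT_{2R}\rho_1\wedge\cdots\wedge d\cT_{2R}\rho_n\ \ge\ \tfrac1K\norm{\rho}_{n,A(r,R)}^n - C\sum_k\norm{\cT_{2R}d\rho}_{n,B^n(2R)}^k\norm{\rho}_{n,B^n(2R)}^{n-k}$ stays on the favorable side; the dichotomy \eqref{eq:eps_1}/\eqref{eq:eps_2} then finishes. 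In particular, your worry that ``there is no ball interior to the domain on which $\rho$ is a full differential'' is moot: $d\cT_{2R}\rho$ is exact on all of $B^n(2R)$ by construction, so one may use balls and no inner sphere ever appears.

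In your version, the term $\int_{S^{n-1}(r')}\omega$ is of the same order as the unknown $\norm{\rho}_{n,A(r,R)}^n$ and neither of your remedies controls it. The trace of $\cT_{2R}\rho$ (equivalently of $\rho$) on an interior sphere is not part of the data $\norm{\rho}_{n,A(R,2R)}$, $\norm{d\rho}_{q,B^n(2R)}$; and the isoperimetric inequality \eqref{eq:iso} goes the wrong way --- it bounds an interior Jacobian integral by a boundary integral, not a boundary term by interior data. Averaging over the inner radius only trades the sphere integral for $C\delta^{-1}\int |\cT_{2R}\rho|\,|d\cT_{2R}\rho|^{n-1}$ over a thin interior annulus, which is comparable to $\norm{\rho}_{n}^n$ there with a constant that is not small, so it cannot be absorbed as in \eqref{eq:r1}. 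For the same reason the inward chain of annuli does not close: each step manufactures a fresh uncontrolled inner-sphere term of full size (there is no factor strictly less than $1$, so this is not a Caccioppoli/hole-filling iteration), and the unknown is never eliminated. The missing idea is precisely the paper's choice of $\omega$ and of balls $B^n(t)$ with $t$ ranging over the outer collar; with that replacement the rest of your argument goes through essentially verbatim.
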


\begin{proof}
Let $A=A(R,2R)$ and $B=B^n(2R)$. We set 
\[
\omega = \sum_{j=1}^n (-1)^j \cT_{2R}\rho_j\; d\cT_{2R}\rho_1 \wedge \cdots \wedge \widehat{d\cT_{2R}\rho_j} \wedge \cdots \wedge d\cT_{2R}\rho_n.
\]

As in the proof of Proposition \ref{prop:hius}, we obtain
\[
\int_{B^n(t)} d\cT_{2R}\rho_1 \wedge \cdots \wedge d\cT_{2R}\rho_n \ge \frac{1}{K}\norm{\rho}_{n,A(r,R)}^n  - C\sum_{k=1}^n \norm{\cT_{2R} d\rho}_{n,B}^k \norm{\rho}_{n,B}^{n-k}
\]
for $R\le t \le 2R$, where $C=C(n)$.

On the other hand,
\begin{eqnarray*}
\int_R^{2R} \left( \int_{S^{n-1}(t)} \omega \right) \dt &=& \int_R^{2R} \left( \int_{B^n(t)} d\omega \right) \dt \\
&=& \int_R^{2R} \left( \int_{B^n(t)} d\cT_{2R}\rho_1 \wedge \cdots \wedge d\cT_{2R}\rho_n \right) \dt
\end{eqnarray*}
and
\begin{eqnarray*}
\int_R^{2R} \left( \int_{S^{n-1}(t)} \omega \right) \dt &\le& \int_{A} |\omega| \le n \int_{A} |\cT_{2R}\rho| |d\cT_{2R}\rho|^{n-1} \\
&\le& n \norm{\cT_{2R}\rho}_{n,A} \norm{d\cT_{2R}\rho}_{n,A}^{n-1}.
\end{eqnarray*}
Thus
\[
R \norm{\rho}_{n,A(r,R)}^n \left( 1 - C \sum_{k=1}^n \frac{\norm{\cT_{2R} d\rho}_{n,B}^k}{\norm{\rho}_{n,B}^k} \right) \le K n  \norm{\cT_{2R}\rho}_{n,A} \norm{d\cT_{2R} \rho}_{n,A}^{n-1}.
\]

There exists $\varepsilon=\varepsilon(n)>0$ so that either 
\begin{equation}
\label{eq:eps_1}
\varepsilon \norm{\rho}_{n,B} \le \norm{\cT_{2R} d\rho}_{n,B}
\end{equation}
or
\begin{equation}
\label{eq:eps_2}
R \norm{\rho}_{n,A(r,R)}^n \le C \norm{\cT_{2R}\rho}_{n,A} \norm{d\cT_{2R}\rho}_{n,A}^{n-1},
\end{equation}
where $C=C(n,K)$. 

Suppose first that \eqref{eq:eps_1} holds. Then
\[
\norm{\rho}_{n,A(r,R)} \le \norm{\rho}_{n,B} \le \frac{1}{\varepsilon} \norm{\cT_{2R} d\rho}_{n,B} \le C \norm{\cT_{2R} d\rho}_{q^*,B} \le C \norm{d\rho}_{q,B},
\]
where $C=C(n, q, R)$. Here we used the Sobolev-Poincar\'e inequality \eqref{eq:sp}. 

Suppose now that \eqref{eq:eps_2} holds. The Sobolev-Poincar\'e inequality applies to the mapping $\cT_{2R}\rho$ in $A$, so 
\[
\norm{\cT_{2R} \rho}_{n,A} \le C \norm{d\cT_{2R} \rho}_{n,A}. 
\]
Therefore, another application of \eqref{eq:sp} gives 
\begin{eqnarray*}
\norm{\rho}_{n,A(r,R)} &\le& C \norm{d\cT_{2R}\rho}_{n,A} \le C\left( \norm{\rho}_{n,A}+\norm{\cT_{2R} d\rho}_{n,A}\right) \\
&\le& C \left( \norm{\rho}_{n,A} + \norm{d\rho}_{q,B}\right),
\end{eqnarray*}
where $C=C(n,K,q,R)$. 
\end{proof}

Having Lemma \ref{lemma:rho_bound} at our disposal, the standard methods in non-linear potential theory can be used to prove Theorem \ref{thm:eu}; see \cite[Chapter 5]{HeinonenJ:Nonptd}.

\begin{proof}[Proof of Theorem \ref{thm:eu}]
Suppose $(\tilde \rho_k)$ is a minimizing sequence for \eqref{eq:min_2}. 
Then $(d\tilde\rho_k)$ is a bounded sequence in $L^q(\bigwedge^2 \R^n)$. Since $\tilde\rho_k$ coincides with $\rho_0$ in $B^n(r)$ and with $\rho_1$ in $\R^n\setminus B^n(R)$ for every $k$, we have, by Lemma \ref{lemma:rho_bound}, that $(\tilde \rho_k)$ is a bounded sequence in $L^n(\bigwedge^1 B^n(R))$. By passing to a subsequence if necessary, we may assume that $\tilde \rho_k \to \tilde \rho_\infty$ weakly and $d\tilde \rho_k \to d\tilde \rho_\infty$ weakly as $k \to \infty$, where $\tilde \rho_\infty\in L^n(\bigwedge^1 B^n(R))$ with $d \tilde \rho_\infty \in L^q(\bigwedge^2 B^n(R))$. By the weak lower semi-continuity of norms, we obtain
\[
\norm{d\tilde \rho_\infty}_{q,A(r,R)} = I_{q,K}(\rho_0,\rho_1,A(r,R)).
\]

Since $q>n/2$, the $K$-quasiconformality of $\tilde \rho_\infty$ is a consequence of compensated compactness \cite[Theorem 5.1]{IwaniecT:Intenl}; see also \cite[Proposition 4.5]{QCF}.

Finally, the boundary conditions $\tilde \rho_\infty |B^n(r) = \rho_0$ and $\tilde \rho_\infty |\R^n\setminus B^n(R) = \rho_1$ follow from weak convergence of the sequence $(\tilde \rho_k)$ to $\tilde \rho_\infty$. Thus $\tilde \rho_\infty \in \cE_{q,K}(\rho_0,\rho_1;A(r,R))$. 
\end{proof}

Following the standard arguments in the elliptic theory we can show that minimizers satisfy an Euler-Lagrange equation; we refer to \cite[5.13]{HeinonenJ:Nonptd} for details.

\begin{lem}
\label{lemma:EL}
A minimizer $\rho$ of the problem \eqref{eq:min_2} satisfies the equation
\begin{equation}
\label{eq:EL}
\int_{A(r,R)} \left\langle |d\rho|_2^{q-2}d\rho, d(h\rho)\right\rangle = 0
\end{equation}
for every $h\in C^\infty_0(A(r,R))$.
\end{lem}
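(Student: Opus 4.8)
The plan is to carry out the standard first-variation argument for the energy functional $\rho' \mapsto \int_{A(r,R)} |d\rho'|_2^q$, using the admissible perturbations already identified before Theorem \ref{thm:eu}: for a minimizer $\rho$ and any $h\in C^\infty_0(A(r,R))$ with $h\ge -1$, the frame $(1+h)\rho$ lies in $\cE_{q,K}(\rho_0,\rho_1;A(r,R))$. First I would set up the family $\rho_t = (1+th)\rho$ for $t$ in a small interval around $0$; since $h$ has compact support in $A(r,R)$, for $|t|$ small enough $1+th\ge 0$ (indeed $\ge -1$), so $\rho_t$ is an admissible competitor, and it agrees with $\rho$ outside $\spt h$, hence satisfies all boundary conditions and remains $K$-quasiconformal (the quasiconformality condition \eqref{eq:QC} is invariant under multiplication by a nonnegative scalar function). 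Thus $\Phi(t) := \int_{A(r,R)} |d\rho_t|_2^q$ is well-defined for small $t$ and has a minimum at $t=0$.

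Next I would compute $\Phi'(0)$. Here $d\rho_t = d((1+th)\rho) = (1+th)\,d\rho + t\,dh\wedge\rho$ (componentwise: $d(\rho_t)_i = (1+th)d\rho_i + t\,dh\wedge\rho_i$), so writing $\eta := \frac{d}{dt}\big|_{t=0} d\rho_t = h\,d\rho + dh\wedge\rho = d(h\rho)$, and differentiating $|X|_2^q = (\langle X,X\rangle)^{q/2}$ gives $\frac{d}{dt}|d\rho_t|_2^q\big|_{t=0} = q\,|d\rho|_2^{q-2}\langle d\rho, \eta\rangle = q\,|d\rho|_2^{q-2}\langle d\rho, d(h\rho)\rangle$. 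Differentiating under the integral sign then yields
\[
\Phi'(0) = q\int_{A(r,R)} \left\langle |d\rho|_2^{q-2} d\rho, d(h\rho)\right\rangle,
\]
and the minimality $\Phi'(0)=0$ gives \eqref{eq:EL} after dividing by $q$. Since this holds for all $h\ge -1$ in $C^\infty_0(A(r,R))$, and this class spans (an arbitrary $h\in C^\infty_0$ can be written as a difference of two such, or simply scaled so that $\|h\|_\infty \le 1$ and then the identity is homogeneous of degree one in $h$), \eqref{eq:EL} holds for every $h\in C^\infty_0(A(r,R))$.

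The main obstacle is justifying the differentiation under the integral sign and the existence of $\Phi'(0)$, since the integrand $t\mapsto |d\rho_t|_2^q$ need only be differentiated for a priori merely $L^q$ data. This is handled by a dominated-convergence argument: on the compact set $\spt h$ one has the pointwise bounds $|d\rho_t|_2 \le C(|d\rho|_2 + |\rho|)$ uniformly for $|t|\le 1$ (with $C$ depending on $h$), and by Lemma \ref{lemma:rho_bound} (or directly, since $\rho\in L^p_\loc$ for some $p>n$ once higher integrability is known, but at this stage $\rho \in L^n_\loc$ already suffices together with $d\rho\in L^q$ and $q>n/2$, noting $|\rho|^{(q-1)}\cdot|\rho|$ integrability via Hölder) the difference quotients $\frac{|d\rho_t|_2^q - |d\rho|_2^q}{t}$ are dominated by an $L^1(\spt h)$ function, e.g. $Cq(|d\rho|_2 + |\rho|)^{q-1}(|d\rho|_2 + |\rho|)$, whose integrability follows since $d\rho\in L^q$, $\rho\in L^{n}_\loc \subset L^q_\loc$ on the bounded set $\spt h$, and $dh$ is bounded. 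One should also remark, as the authors do, that this is entirely routine elliptic theory (cf.\ \cite[5.13]{HeinonenJ:Nonptd}), so the write-up can be kept brief.
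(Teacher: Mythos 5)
Your argument is correct and is exactly the proof the paper has in mind: the authors only cite the standard elliptic argument (\cite[5.13]{HeinonenJ:Nonptd}) together with the observation, stated before Theorem \ref{thm:eu}, that $(1+h)\rho$ is an admissible competitor, and your first-variation computation with $\rho_t=(1+th)\rho$, $\frac{d}{dt}\big|_{t=0}d\rho_t=d(h\rho)$, and dominated convergence fills in precisely those routine details. The only caveat is your parenthetical inclusion $L^n_{\loc}\subset L^q_{\loc}$ on $\spt h$, which requires $q\le n$; for $q>n$ the same integrability issue is already implicit in the paper's own assertion that $(1+h)\rho\in\cE_{q,K}(\rho_0,\rho_1;A(r,R))$, so this is not a gap introduced by your argument.
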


Having the Euler-Lagrange equation at our disposal, we find an Euler-Lagrange equation for the minimizers of \eqref{eq:min_2}. 

\begin{lem}
\label{lemma:Caccioppoli}
Let $\rho$ be a minimizer of the problem \eqref{eq:min_2}. Then
\begin{equation}
\label{eq:Caccioppoli}
\left( \int_{A(r,R)} |d\rho|_2^q h^q \right)^{1/q}  \le q \left( \int_{A(r,R)} |\rho|_2^q |dh|^q \right)^{1/q}
\end{equation}
for every non-negative $h \in C^\infty_0(A(r,R))$.
\end{lem}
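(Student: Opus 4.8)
The plan is to derive the Caccioppoli inequality \eqref{eq:Caccioppoli} directly from the Euler--Lagrange equation \eqref{eq:EL} by the standard test-function trick adapted to the present setting, where the admissible perturbations of the minimizer $\rho$ are the conformal rescalings $(1+h)\rho$. First I would fix a non-negative $h\in C^\infty_0(A(r,R))$ and rescale if necessary so that $h\le 1$, which makes $(1-h)\rho$ an admissible competitor (since $-h\ge -1$); by density and the affine structure of $\cE_{q,K}$ one may use $-h$ (equivalently $\pm h$, after the rescaling) as a test function in \eqref{eq:EL}. Writing out $d((-h)\rho) = -\,dh\wedge\rho - h\,d\rho$ componentwise (recall $d\rho=(d\rho_1,\dots,d\rho_n)$ and $d(h\rho_i) = dh\wedge\rho_i + h\,d\rho_i$), the equation becomes
\[
\int_{A(r,R)} |d\rho|_2^{q-2}\,\langle d\rho, h\,d\rho\rangle
= -\int_{A(r,R)} |d\rho|_2^{q-2}\,\langle d\rho,\, dh\wedge\rho\rangle,
\]
so that
\[
\int_{A(r,R)} |d\rho|_2^{q}\,h
\le \int_{A(r,R)} |d\rho|_2^{q-1}\,|dh|\,|\rho|_2 .
\]
Here I use that $\langle d\rho, h\,d\rho\rangle = h\,|d\rho|_2^2$ and that $|\langle d\rho, dh\wedge\rho\rangle| \le |d\rho|_2\,|dh\wedge\rho|_2 \le C|d\rho|_2\,|dh|\,|\rho|_2$ by Cauchy--Schwarz on $\bigwedge^2$ together with the elementary bound on the norm of a wedge product; one should be slightly careful about the precise constant absorbed into the Hilbert--Schmidt normalization, but it is harmless.

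Next I would insert the power $h^q$ rather than $h$ by running the same computation with the test function $-h^q$ in place of $-h$ (again admissible after rescaling $h$ so that $h^q\le 1$). This gives
\[
\int_{A(r,R)} |d\rho|_2^{q}\,h^q
\le q\int_{A(r,R)} |d\rho|_2^{q-1}\,h^{q-1}\,|dh|\,|\rho|_2 ,
\]
since $d(h^q) = q\,h^{q-1}\,dh$. Now apply H\"older's inequality with exponents $q/(q-1)$ and $q$ to the right-hand side, splitting the integrand as $\big(|d\rho|_2\,h\big)^{q-1}\cdot\big(|dh|\,|\rho|_2\big)$:
\[
\int_{A(r,R)} |d\rho|_2^{q}\,h^q
\le q\left(\int_{A(r,R)} |d\rho|_2^{q}\,h^q\right)^{(q-1)/q}
\left(\int_{A(r,R)} |\rho|_2^{q}\,|dh|^q\right)^{1/q}.
\]
Dividing through by the first factor on the right — which is finite since $\rho\in W_{n,q}^\loc$ and $h$ is compactly supported, and may be assumed positive (otherwise \eqref{eq:Caccioppoli} is trivial) — yields exactly \eqref{eq:Caccioppoli}.

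The main obstacle, such as it is, is purely bookkeeping: making precise that $\pm h^q$ (suitably normalized) is a legitimate test function in \eqref{eq:EL}, and keeping track of the constant in the bound $|dh\wedge\rho|_2\le C|dh|\,|\rho|_2$ so that it combines with the factor $q$ from differentiating $h^q$ to give precisely the stated constant $q$ on the right of \eqref{eq:Caccioppoli} rather than a constant depending on $n$. One clean way to avoid the constant issue is to observe that the inequality \eqref{eq:EL} is tested not with arbitrary forms but with $d(h\rho)$ where $h$ is scalar, so the wedge $dh\wedge\rho$ has a special structure — for each component $dh\wedge\rho_i$ is a simple $2$-form built from a single $1$-form $dh$, and one can estimate $\sum_i|dh\wedge\rho_i|_2^2$ directly in terms of $|dh|^2\sum_i|\rho_i|^2 = n|dh|^2|\rho|_2^2$; combined with the $1/n$ in the normalization of $|d(h\rho)|_2$, the constant works out to the claimed $q$. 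I would also remark that approximating a general $h\in C^\infty_0$ by the rescaled versions and passing to the limit is justified by dominated convergence, completing the argument.
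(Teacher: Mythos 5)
Your proposal is correct and follows essentially the same route as the paper: test the Euler--Lagrange equation \eqref{eq:EL} with $h^q$, expand $d(h^q\rho)=qh^{q-1}\,dh\wedge\rho+h^q\,d\rho$, bound the cross term pointwise by Cauchy--Schwarz, apply H\"older with exponents $q/(q-1)$ and $q$, and divide. Your preliminary rescaling of $h$ and the detour through the test function $h$ itself are unnecessary --- Lemma \ref{lemma:EL} is stated for all $h\in C^\infty_0(A(r,R))$ and \eqref{eq:Caccioppoli} is homogeneous in $h$ --- but they are harmless.
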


\begin{proof}
By the Euler-Lagrange equation \eqref{eq:EL},
\begin{eqnarray*}
0 &=& \int_{A(r,R)} \langle |d\rho|_2^{q-2}d\rho, d(h^q \rho) \rangle \\
&=& \int_{A(r,R)} \langle |d\rho|_2^{q-2}d\rho, q h^{q-1} dh \wedge \rho + h^q d\rho \rangle.
\end{eqnarray*}
Thus
\begin{eqnarray*}
\int_{A(r,R)} h^q |d\rho|_2^q &\le& q \int_{A(r,R)} |d\rho|_2^{q-1} h^{q-1} |dh| |\rho| \\
&\le& q \left( \int_{A(r,R)} |d\rho|_2^q h^q \right)^{(q-1)/q} \left( \int_{A(r,R)} |dh|^q |\rho|_2^q \right)^{1/q}.
\end{eqnarray*}   
The claim follows.
\end{proof}

Caccioppoli's inequality \eqref{eq:Caccioppoli} readily yields the following corollary.
\begin{cor}
\label{cor:Caccioppoli}
Let $B=B^n(x_0,s)$ be a ball so that $2\bar B \subset A(r,R)$. Then
\[
\Jnorm{d\rho}_{q,B} \le \frac{2q}{s} \Jnorm{\rho}_{q,2B}.
\]
\end{cor}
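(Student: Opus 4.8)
The plan is to obtain the corollary directly from Caccioppoli's inequality \eqref{eq:Caccioppoli} by testing it against a radial cutoff adapted to the pair $B\subset 2B$. First I would choose a smooth function $\eta\colon[0,\infty)\to[0,1]$ with $\eta\equiv 1$ on $[0,s]$, $\eta\equiv 0$ on $[2s,\infty)$, and $|\eta'|\le 2/s$, and set $h(x)=\eta(|x-x_0|)$. Then $h$ is non-negative, $h\equiv 1$ on $B$, one has $|dh|\le 2/s$ with $\spt dh\subset 2B$, and $\spt h\subset 2\bar B$, which is a compact subset of $A(r,R)$ by the hypothesis $2\bar B\subset A(r,R)$; hence $h\in C^\infty_0(A(r,R))$ and \eqref{eq:Caccioppoli} applies to it.

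Next I would apply \eqref{eq:Caccioppoli} to this $h$. Since $0\le h\le 1$ and $h\equiv 1$ on $B$, the left-hand side of \eqref{eq:Caccioppoli} is at least $\Jnorm{d\rho}_{q,B}$. For the right-hand side, using $|dh|\le 2/s$ and $\spt h\subset 2B$ gives $\int_{A(r,R)}|\rho|_2^q|dh|^q\le (2/s)^q\int_{2B}|\rho|_2^q$. Taking $q$-th roots and combining the two estimates yields $\Jnorm{d\rho}_{q,B}\le \tfrac{2q}{s}\Jnorm{\rho}_{q,2B}$, as claimed.

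There is essentially no obstacle here: the argument is the standard cutoff trick that converts a Caccioppoli inequality stated for a general test function into a ball-to-double-ball estimate. The only points requiring (minor) attention are fixing the gradient constant of the cutoff to be $2/s$ (rather than the naive $1/s$, so as to absorb the smoothing) and verifying that the support of $h$ lies compactly inside the annulus $A(r,R)$, both of which are immediate consequences of the hypothesis $2\bar B\subset A(r,R)$.
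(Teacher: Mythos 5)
Your argument is correct and is exactly the standard cutoff argument the paper intends (the paper offers no proof, saying only that the Caccioppoli inequality ``readily yields'' the corollary). One bookkeeping caveat: since $\Jnorm{\cdot}_{q,\Omega}$ denotes an integral \emph{average}, the left-hand side of \eqref{eq:Caccioppoli} is bounded below by $|B|^{1/q}\,\Jnorm{d\rho}_{q,B}$ while your bound for the right-hand side is $\tfrac{2q}{s}\,|2B|^{1/q}\,\Jnorm{\rho}_{q,2B}$, so the cutoff argument literally gives the constant $2^{n/q}\cdot\tfrac{2q}{s}\le\tfrac{8q}{s}$ (using $q>n/2$) rather than $\tfrac{2q}{s}$ --- a dimensional factor that is equally implicit in the paper's stated constant and immaterial for the reverse H\"older/Gehring application.
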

Here, and in what follows, we denote the integral average 
\[
\Jnorm{\omega}_{p,\Omega} = \left( \vint_\Omega |\omega|_2^p \right)^{1/p} 
\]
whenever $\Omega$ is a bounded domain in $\R^n$ and $\omega$ is an $n$-tuple of forms in $\Omega$.

The main result in this section is the following reverse H\"older's inequality.
\begin{thm}
\label{thm:HI}
Let $\rho_0$ be a minimizer of the problem \eqref{eq:min_2}. Then there exists $C=C(n)>0$ so that
\begin{equation}
\label{eq:RH}
\Jnorm{\rho_0}_{n,B}^n \le C\Jnorm{\rho_0}_{\max\{n-1,q\},2B}^n 
\end{equation}
for balls $B=B^n(x_0,s)$ satisfying $2\bar B\subset A(r,R)$.
\end{thm}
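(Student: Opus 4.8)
The plan is to establish the reverse H\"older inequality \eqref{eq:RH} by combining the Caccioppoli inequality from Corollary~\ref{cor:Caccioppoli} with a Sobolev--Poincar\'e estimate for the potential $\cT_s\rho_0$, exactly in the spirit of the proof of Lemma~\ref{lemma:rho_bound} but localized to the ball $B=B^n(x_0,s)$ and with the integrals replaced by their averages. First I would record that on a ball $B$ with $2\bar B\subset A(r,R)$ the frame $\rho_0$ is $K$-quasiconformal, and that the translated/rescaled Poincar\'e operator $\cT$ adapted to the ball $2B$ satisfies $d\cT(2B)\rho_0 = \rho_0 - \cT(2B)d\rho_0$ together with the scale-invariant Sobolev--Poincar\'e inequality \eqref{eq:sp}. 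Applying the isoperimetric inequality \eqref{eq:iso} on spheres $S^{n-1}(x_0,t)$ for $s\le t\le 2s$ and integrating in $t$, as in Lemma~\ref{lemma:ipe}, one controls $\vint_B |J_{\cT(2B)\rho_0}|$ by $\bigl(\Jnorm{\rho_0}_{n-1,2B}+\text{(lower order term in }d\rho_0)\bigr)^n$; here the key point is that $\norm{d\rho_0}_{q,2B}$ with $q>n/2$ feeds, via \eqref{eq:sp}, into an $L^{q^*}\supset L^{n-1}$ bound on $\cT(2B)d\rho_0$ wherever $q^*\ge n-1$, i.e.\ for all $q>n/2$ in dimension $n$, giving the term $\Jnorm{d\rho_0}_{q,2B}$ on the right.

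Next, following the algebraic manipulation in Proposition~\ref{prop:hius} and Lemma~\ref{lemma:rho_bound}, I would expand $d\cT(2B)\rho_1\wedge\cdots\wedge d\cT(2B)\rho_n = (\rho_1-\cT d\rho_1)\wedge\cdots\wedge(\rho_n-\cT d\rho_n)$ to obtain, after integrating over $B^n(x_0,t)$ and using a Stokes-type argument with the $(n-1)$-form $\omega=\sum_j (-1)^j \cT(2B)\rho_j\, d\cT(2B)\rho_1\wedge\cdots\wedge\widehat{d\cT(2B)\rho_j}\wedge\cdots\wedge d\cT(2B)\rho_n$, a lower bound of the form $\tfrac1K\Jnorm{\rho_0}_{n,B}^n$ minus cross terms $\sum_k \Jnorm{\cT d\rho_0}_{n,2B}^k\Jnorm{\rho_0}_{n,2B}^{n-k}$, balanced against an upper bound $C\Jnorm{\cT(2B)\rho_0}_{n,2B}\Jnorm{d\cT(2B)\rho_0}_{n,2B}^{n-1}$. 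An $\varepsilon$-dichotomy then splits into the case where $\Jnorm{\cT d\rho_0}_{n,2B}\gtrsim\varepsilon\Jnorm{\rho_0}_{n,2B}$, handled directly by \eqref{eq:sp} to yield $\Jnorm{\rho_0}_{n,B}\le C\Jnorm{d\rho_0}_{q,2B}$, and the complementary case where the Sobolev--Poincar\'e inequality for the mapping $\cT(2B)\rho_0$ on the annulus $2B\setminus \bar B$ gives $\Jnorm{\cT\rho_0}_{n,2B}\le C\Jnorm{d\cT\rho_0}_{n,2B}\le C(\Jnorm{\rho_0}_{n,2B}+\Jnorm{\cT d\rho_0}_{n,2B})$, and hence $\Jnorm{\rho_0}_{n,B}\le C(\Jnorm{\rho_0}_{n-1,2B}+\Jnorm{d\rho_0}_{q,2B})$.

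The final step is to absorb $\Jnorm{d\rho_0}_{q,2B}$: by Corollary~\ref{cor:Caccioppoli} (applied on $2B$, which requires $4\bar B\subset A(r,R)$ — I would either state \eqref{eq:RH} with a slightly larger dilation or run Caccioppoli with a cutoff supported in $2B$ and gradient bound $\lesssim 1/s$) one has $\Jnorm{d\rho_0}_{q,B}\le (Cq/s)\Jnorm{\rho_0}_{q,2B}\le (Cq/s)\Jnorm{\rho_0}_{\max\{n-1,q\},2B}$. Since the Poincar\'e operator $\cT(2B)$ scales so that the factor $1/s$ matches the homogeneity, the $s$-dependence cancels and one is left with $\Jnorm{\rho_0}_{n,B}\le C\Jnorm{\rho_0}_{\max\{n-1,q\},2B}$, which is \eqref{eq:RH}; raising to the $n$-th power gives the stated form. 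I expect the main obstacle to be bookkeeping the scaling: ensuring that every application of \eqref{eq:iso}, \eqref{eq:sp}, and Corollary~\ref{cor:Caccioppoli} is genuinely scale-invariant once integrals are normalized to averages, so that the constant $C$ in \eqref{eq:RH} depends only on $n$ (and $K$, $q$) and not on $s$ or the position $x_0$ — in particular, checking that the lower-order $d\rho_0$ terms really are absorbable rather than merely bounded, and that the Gehring-type exponent $\max\{n-1,q\}<n$ is strictly below $n$ so that Gehring's lemma can subsequently be applied.
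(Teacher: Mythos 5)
Your overall strategy is the right one, and your first and third paragraphs reproduce the two outer layers of the paper's argument: the isoperimetric estimate for $\vint_B |J_{\cT_{x_0}\rho_0}|$ (the paper's term $I_1$, obtained exactly as in Lemma \ref{lemma:ipe}) and the Caccioppoli/scaling bookkeeping. The genuine gap is in your second paragraph, where the balancing is done. There you import from Lemma \ref{lemma:rho_bound} the Stokes-type upper bound $\frac{C}{s}\norm{\cT\rho_0}_{n,2B}\norm{d\cT\rho_0}_{n,2B}^{n-1}$ together with its $\varepsilon$-dichotomy, and in the complementary case you conclude ``hence $\Jnorm{\rho_0}_{n,B}\le C(\Jnorm{\rho_0}_{n-1,2B}+\Jnorm{d\rho_0}_{q,2B})$''. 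That conclusion does not follow from the chain you display: after $\Jnorm{\cT\rho_0}_{n,2B}\le Cs\Jnorm{d\cT\rho_0}_{n,2B}$ (which, as written, needs a subtracted constant or the $L^n$-boundedness of $\cT$ to be legitimate) and $\Jnorm{d\cT\rho_0}_{n,2B}\le \Jnorm{\rho_0}_{n,2B}+\Jnorm{\cT d\rho_0}_{n,2B}$, the right-hand side is comparable to $\Jnorm{\rho_0}_{n,2B}^n$, so what you actually get is $\Jnorm{\rho_0}_{n,B}\le C\Jnorm{\rho_0}_{n,2B}$ with $C>1$ and the \emph{same} exponent $n$. That is vacuous for a reverse H\"older inequality and cannot feed Gehring's lemma; the exponent drop to $n-1$ appears nowhere in that chain. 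The $\omega$-form argument of Lemma \ref{lemma:rho_bound} serves a different purpose (controlling $\rho$ on an inner region by data on an outer annulus at the same exponent), and transplanting it here loses exactly the gain the theorem is about.

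The correct assembly, which is the paper's, is to feed your first paragraph directly into the quasiconformality lower bound: $\Jnorm{\rho_0}_{n,B}^n\le K\vint_B J_{\rho_0}\le C\vint_B|J_{\cT_{x_0}\rho_0}|+C\sum_{k\ge 1}\Jnorm{\cT_{x_0}d\rho_0}_{n,B}^k\Jnorm{\rho_0}_{n,B}^{n-k}$, where the first term is bounded, via the isoperimetric inequality, \eqref{eq:sp} and Corollary \ref{cor:Caccioppoli}, by $C\Jnorm{\rho_0}_{\max\{n-1,q\},2B}^n$, and the cross terms by $C\sum_{k\ge1}\Jnorm{\rho_0}_{\max\{n-1,q\},2B}^k\Jnorm{\rho_0}_{n,B}^{n-k}$. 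The absorption is then the dichotomy: either $\Jnorm{\rho_0}_{\max\{n-1,q\},2B}\ge c(n,K)\Jnorm{\rho_0}_{n,B}$, in which case \eqref{eq:RH} is immediate, or the sum is at most $\frac{1}{2}\Jnorm{\rho_0}_{n,B}^n$ and is absorbed into the left-hand side. Note also that your $\varepsilon$-dichotomy (comparing $\Jnorm{\cT d\rho_0}_{n,2B}$ with $\Jnorm{\rho_0}_{n,2B}$) would leave, in the complementary case, cross terms of size $\varepsilon\Jnorm{\rho_0}_{n,2B}^n$ on the larger ball, which cannot be absorbed into $\Jnorm{\rho_0}_{n,B}^n$ without invoking a weak-reverse-H\"older-with-small-constant variant of Gehring's lemma; keeping the cross terms in the mixed form above avoids this. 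Your remaining worries (the $2B$ versus $4B$ dilation in Caccioppoli, scale invariance of the constants, and the dependence of $C$ on $K$ and $q$ as well as $n$) are legitimate but minor, and the dilation issue is present in the paper's own write-up as well.
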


Gehring's lemma now yields the higher integrability of $\rho_0$; see e.g. \cite[Corollary 14.3.1]{IwaniecT:Geoftn}.
\begin{cor}
Let $\rho_0$ be a minimizer of the problem \eqref{eq:min_2}. Then there exists $p>n$ and $C_0=C_0(p,n)>0$ so that 
\[
\Jnorm{\rho_0}_{p,B} \le C_0 \Jnorm{\rho_0}_{n,2B}
\]
whenever $B=B^n(x_0,s)$ is a ball satisfying $2\bar B\subset A(r,R)$.
\end{cor}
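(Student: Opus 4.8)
The plan is to deduce the corollary from Theorem~\ref{thm:HI} by an application of Gehring's lemma. First I would set $g=|\rho_0|_2$, a non-negative function lying in $L^n_\loc(A(r,R))$ since $\rho_0$ is a $W^\loc_{n,q}$-frame, and put $t=\max\{n-1,q\}$. To invoke Gehring's lemma one must be in the regime $t<n$, which holds as soon as $q<n$; I would reduce to that case, since when $q\ge n$ one has $d\rho_0\in L^q_\loc$ and the conclusion follows from the Caccioppoli inequality of Corollary~\ref{cor:Caccioppoli} together with a Giaquinta--Modica type version of Gehring's lemma carrying a right-hand side term. So assume $q<n$, hence $t<n$ and $\rho_0\in L^t_\loc(A(r,R))$.

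Next I would rewrite the reverse H\"older inequality \eqref{eq:RH} in the form required by Gehring's lemma. Unwinding the notation $\Jnorm{\cdot}_{p,\Omega}=(\vint_\Omega|\cdot|_2^p)^{1/p}$, the estimate \eqref{eq:RH} becomes
\[
\left(\vint_B g^n\right)^{1/n}\le C\left(\vint_{2B} g^t\right)^{1/t},\qquad 2\bar B\subset A(r,R),
\]
with $C=C(n)$. This is exactly the hypothesis of the self-improving reverse H\"older inequality on balls with doubled supports; see \cite[Corollary 14.3.1]{IwaniecT:Geoftn}.

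Gehring's lemma then furnishes $\varepsilon=\varepsilon(n)>0$ and $C_0=C_0(n,\varepsilon)>0$ with $g\in L^{n+\varepsilon}_\loc(A(r,R))$ and
\[
\left(\vint_B g^{n+\varepsilon}\right)^{1/(n+\varepsilon)}\le C_0\left(\vint_{2B} g^n\right)^{1/n}
\]
whenever $2\bar B\subset A(r,R)$. Taking $p=n+\varepsilon>n$ and recalling $g=|\rho_0|_2$, this last display is precisely $\Jnorm{\rho_0}_{p,B}\le C_0\Jnorm{\rho_0}_{n,2B}$, the assertion of the corollary.

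The substance of the statement is entirely contained in Theorem~\ref{thm:HI}, so I do not expect a real obstacle here. The two points I would be careful about are: keeping the right-hand exponent $t$ in \eqref{eq:RH} strictly below $n$, which is why the range $q\ge n$ is peeled off; and checking that the Gehring exponent $\varepsilon$ and the constant $C_0$ depend only on $n$, which they do because the constant in \eqref{eq:RH} depends only on $n$. I would also make sure to cite the form of Gehring's lemma adapted to a ball $B$ and its dilate $2B$ (as in \cite[Corollary 14.3.1]{IwaniecT:Geoftn}) rather than the concentric-cube version.
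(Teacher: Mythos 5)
Your main argument is the paper's own: feed the reverse H\"older inequality \eqref{eq:RH} of Theorem \ref{thm:HI} into Gehring's lemma in the $B$--$2B$ form of \cite[Corollary 14.3.1]{IwaniecT:Geoftn}; in the range $n/2<q<n$, where $\max\{n-1,q\}<n$, your write-up is complete and the dependence of $p$ and $C_0$ on $n$ alone comes out exactly as you say.

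The only point to revisit is your side remark on the case $q\ge n$, which the hypotheses of \eqref{eq:min_2} do allow and which the paper's one-line proof passes over in silence (indeed \eqref{eq:RH} is vacuous there). Your proposed repair does not work as stated: Corollary \ref{cor:Caccioppoli} trades $\Jnorm{d\rho_0}_{q,B}$ for $s^{-1}\Jnorm{\rho_0}_{q,2B}$ at the \emph{same} exponent $q\ge n$, so it produces no exponent gain that a Gehring-type lemma could exploit for $\rho_0$ (and higher integrability of $d\rho_0$ is in any case not what is claimed). A route that does work in that regime is to stop the proof of Theorem \ref{thm:HI} before the Caccioppoli step: absorbing the mixed terms $\Jnorm{\cT_{x_0}d\rho_0}_{n,B}^k\Jnorm{\rho_0}_{n,B}^{n-k}$ into the left-hand side by Young's inequality yields an inhomogeneous reverse H\"older inequality of the form
\[
\Jnorm{\rho_0}_{n,B}^n\le C\Jnorm{\rho_0}_{n-1,2B}^n+C\Jnorm{\cT_{x_0}d\rho_0}_{n,2B}^n,
\]
and since $d\rho_0\in L^q_\loc$ with $q\ge n$ places $\cT_{x_0}d\rho_0$ in $L^p_\loc$ for every $p<\infty$ (Sobolev embedding for $W^{1,q}$), the inhomogeneous (Giaquinta--Modica) version of Gehring's lemma that you mention, applied to $g=|\rho_0|_2^{n-1}$ with the $\cT_{x_0}d\rho_0$ term as the right-hand side datum, gives the claim. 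Alternatively, simply record that your proof (like the paper's) covers the range $n/2<q<n$.
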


\begin{proof}[Proof of Theorem \ref{thm:HI}]
For the purpose of this proof, we define $\cT_{x_0}$ to be the averaged Poincar\'e homotopy operator centered at $x_0$, that is, $\cT_{x_0} = (\tau^{-1})^* \circ \cT_{2s} \circ \tau^*$, where $\tau$ is the translation $x\mapsto x + x_0$. Naturally, the properties of $\cT$ discussed in Section \ref{sec:Poinc} hold also for $\cT_{x_0}$.

Let $\rho = (\rho_1,\ldots,\rho_n)$. By quasiconformality of $\rho$ in $A(r,R)$, we obtain, as in the proof of Proposition \ref{prop:hius}, that
\begin{eqnarray*}
\norm{\rho}_{n,B}^n &\le& C \norm{J_\rho}_{1,B} 
\le C \norm{J_{\cT_{x_0} \rho}}_{1,B} + C \sum_{k=0}^{n-1} \norm{\cT_{x_0} d\rho}_{n,B}^k \norm{\rho}_{n,B}^{n-k} \\
&=& I_1 + I_2,
\end{eqnarray*}
where $C=C(n,K)$. We estimate the integral $I_1$ first. Since $\cT_{x_0}\rho\in W^{1,n}_\loc(2B),\R^n)$, we have by the isoperimetric inequality \eqref{eq:iso},
\[
\int_{B^n(x_0,t)} |d\cT_{x_0} \rho_1\wedge \cdots \wedge d\cT_{x_0} \rho_n| \le C \left( \int_{S^{n-1}(x_0,t)} |d\cT_{x_0}\rho|^{n-1} \right)^{n/(n-1)}
\]
for almost every $r \le t \le 2r$. Thus
\begin{eqnarray*}
\left( \int_{B} |d\cT_{x_0} \rho_1\wedge \cdots \wedge d\cT_{x_0} \rho_n| \right)^{(n-1)/n} &\le& C\vint_r^{2r} \int_{S^{n-1}(x_0,t)} |d\cT_{x_0}\rho|^{n-1} \\
&\le& \frac{C}{r} \int_{2B} |d\cT_{x_0}\rho|^{n-1}.
\end{eqnarray*} 
Since
\[
\norm{d\cT_{x_0} \rho}_{n-1,2B} \le \norm{\rho}_{n-1,2B} + \norm{\cT_{x_0} d\rho}_{n-1,2B},
\]
we have, by the Sobolev-Poincar\'e and Caccioppoli's inequality,
\begin{eqnarray*}
I_1^{1/n} &\le& C r^{-1/(n-1)} \norm{\rho}_{n-1,2B} + Cr \Jnorm{\cT_{x_0} d\rho}_{n-1,2B} \\
&\le& Cr^{-1/(n-1)} \norm{\rho}_{n-1,2B} + Cr^2 \Jnorm{d\rho}_{q,2B} \\
&\le& Cr \Jnorm{\rho}_{n-1,2B} + Cr\Jnorm{\rho}_{q,2B} \\
&\le& Cr \Jnorm{\rho}_{\max\{n-1,q\},2B}.
\end{eqnarray*}
To estimate $I_2$ we use first the Sobolev-Poincar\'e inequality and then Caccioppoli's inequality to obtain
\begin{eqnarray*}
I_2 &\le& C \sum_{k=1}^n r^k \Jnorm{\cT_{x_0} d\rho}_{n,B}^k \norm{\rho}_{n,B}^{n-k} \le C \sum_{k=1}^n r^{2k} \Jnorm{d\rho}_{q,B}^k \norm{\rho}_{n,B}^{n-k} \\
&\le& C \sum_{k=1}^n r^k \Jnorm{\rho}_{q,2B}^k \norm{\rho}_{n,B}^{n-k} 
\le C r^n \sum_{k=1}^n \Jnorm{\rho}_{\max\{n-1,q\},2B}^k \Jnorm{\rho}_{n,B}^{n-k}.
\end{eqnarray*}
Combining estimates for $I_1$ and $I_2$ we have
\begin{equation}
\label{eq:4at}
\begin{split}
\Jnorm{\rho}_{n,B}^n & \le C r^{-n} ( I_1 + I_2) \\
& \le C \Jnorm{\rho}_{n-1,2B}^n + C \sum_{k=1}^n \Jnorm{\rho}_{\max\{n-1,q\},2B}^k \Jnorm{\rho}_{n,B}^{n-k},
\end{split}
\end{equation}
where $C=C(n,K,\varphi)$.

Suppose that \eqref{eq:RH} does not hold with $C_0=1/(2+2nC)$. Then, by \eqref{eq:4at},
\begin{eqnarray*}
\Jnorm{\rho}_{n,B}^n &\le& C \Jnorm{\rho}_{n-1,2B}^n + (1/2) \Jnorm{\rho}_{n,B}^n. 
\end{eqnarray*}
and \eqref{eq:4at} holds with $C_0 = 2C$. The proof is complete.
\end{proof}

\bibliographystyle{abbrv}
\bibliography{QCExtensionFields}

\vskip 5pt

\noindent P.P.
Department of Mathematics and Statistics (P.O. Box 68),
FI-00014 University of Helsinki, Finland.
e-mail: pekka.pankka@helsinki.fi

\vskip 3pt

\noindent K.R.
Department of Mathematics and Statistics (P.O. Box 35),
FI-40014 University of Jyv\"askyl\"a, Finland.
e-mail: kai.i.rajala@jyu.fi

\end{document}